\documentclass[english]{amsart}
\usepackage[utf8]{inputenc}
\usepackage{babel}
\usepackage{amstext}
\usepackage{amsthm}
\usepackage{amssymb}
\usepackage{xargs}[2008/03/08]
\usepackage[all]{xy}
\usepackage[unicode=true,
 bookmarks=false,
 breaklinks=false,pdfborder={0 0 1},backref=section,colorlinks=false]
 {hyperref}

\makeatletter
\numberwithin{equation}{section}
\numberwithin{figure}{section}
\theoremstyle{plain}
\newtheorem{thm}{\protect\theoremname}
\theoremstyle{plain}
\newtheorem{lem}[thm]{\protect\lemmaname}
\theoremstyle{remark}
\newtheorem{rem}[thm]{\protect\remarkname}
\theoremstyle{plain}
\newtheorem{cor}[thm]{\protect\corollaryname}
\theoremstyle{plain}
\newtheorem{prop}[thm]{\protect\propositionname}

\@ifundefined{date}{}{\date{}}
\usepackage{enumerate}\usepackage{xcolor}\usepackage{amscd}
\usepackage{xargs}[2008/03/08]

\usepackage[all]{xy}
\usepackage{xcolor}

\usepackage{tikz}

\usepackage{pgfplots}

\usetikzlibrary{arrows}
\usetikzlibrary{3d}

\usetikzlibrary{automata}

\usetikzlibrary{chains}

\usetikzlibrary{positioning}

\usetikzlibrary{backgrounds}

\usetikzlibrary{calc,decorations.pathreplacing}

\pgfplotsset{compat=1.9}

\usepackage{enumitem}
\setenumerate{ label= (\alph*)}
\setenumerate[2]{ label= (\alph{enumi}\arabic*)} 


\newcommand{\pullbackcorner}[1][dr]{\save*!/#1+1.2pc/#1:(1,-1)@^{*}\restore}
\newcommand{\pushoutcorner}[1][dr]{\save*!/#1-1.2pc/#1:(-1,1)@^{*}\restore}

\def\mod {\mathop{\rm mod}\nolimits}

\def\Ker {\mathop{\rm Ker}\nolimits}

\def\B{\mathcal B}

\def\t{\mathbf{t}}

\def\Coker{\mathrm{Coker}}

\def\A{\mathcal {A}}

\def\C{\mathcal C}

\def\colim{\mathsf{colim}_\Sigma}






\global\long\def\Fun{\operatorname{Fun}}
\global\long\def\A{\mathcal{A}}
\global\long\def\B{\mathcal{B}}
\global\long\def\col{\mathsf{colim}}
\global\long\def\limite{\mathsf{lim}}

\@namedef{subjclassname@2020}{%
 \textup{2020} Mathematics Subject Classification}

\makeatother

\providecommand{\corollaryname}{Corollary}
\providecommand{\lemmaname}{Lemma}
\providecommand{\propositionname}{Proposition}
\providecommand{\remarkname}{Remark}
\providecommand{\theoremname}{Theorem}

\begin{document}
\title{Exactness of limits and colimits in abelian categories revisited}
\author{Alejandro Argudín-Monroy}
\address{Instituto de Matemáticas, Universidad Nacional Autónoma de México,
Circuito exterior, Ciudad Universitaria, CDMX 04510, México}
\email{argudin@ciencias.unam.mx}
\thanks{The first named author was supported by a postdoctoral fellowship
from Programa de Desarrollo de las Ciencias Básicas, Ministerio de
educación y cultura, Universidad de la República, Uruguay. He is currently
supported with a by a postdoctoral fellowship EPM(1) 2024 from SECIHTI}
\author{Carlos E. Parra}
\address{Instituto de Ciencias F{\'i}sicas y Matemáticas, Edificio Emilio
Pugin, Campus Isla Teja, Universidad Austral de Chile, 5090000 Valdivia,
CHILE}
\email{carlos.parra@uach.cl}
\thanks{The second named author was supported by CONICYT/FONDECYT/REGULAR/1200090.
He is currently supported by ANID+FONDECYT/REGULAR+1240253}
\keywords{Abelian category, colimits, limits, groups of extensions, Ext, exactness}
\subjclass[2000]{18A25, 18A30, 18A35, 18A40, 18E10}
\begin{abstract}
Let $\Sigma$ be a small category and $\mathcal{A}$ be a $\Sigma$-co-complete
(resp. $\Sigma$-complete) abelian category. It is a well-known fact
that the category $\operatorname{Fun}(\Sigma,\mathcal{A})$ of functors
of $\Sigma$ in $\mathcal{A}$ is an abelian category, and that the
functor $\mathsf{colim}_{\Sigma}(-):\operatorname{Fun}(\Sigma,\mathcal{A})\rightarrow\mathcal{A}$
(resp. $\mathsf{lim}_{\Sigma}(-):\operatorname{Fun}(\Sigma,\mathcal{A})\rightarrow\mathcal{A}$)
is left (resp. right) adjoint to $\kappa^{\Sigma}:\mathcal{A}\rightarrow\operatorname{Fun}(\Sigma,\mathcal{A})$,
where $\kappa^{\Sigma}$ is the associated constant diagram functor.
In this paper we will show that the functor $\mathsf{colim}_{\Sigma}(-)$
(resp. $\mathsf{lim}_{\Sigma}(-)$) is exact if and only if the pair
of functors $\left(\mathsf{colim}_{\Sigma}(-),\kappa^{\Sigma}\right)$
(resp. $\left(\kappa^{\Sigma},\mathsf{lim}_{\Sigma}(-)\right)$) is
Ext-adjoint. As an application of our findings, we will give new proofs
of known results on the exactness of limits and colimits in abelian
categories. In particular, we will prove that every complete category
with enough projective effacements has exact products; and that a
necessary condition for the heart of the t-structure associated to
a torsion pair $(\mathcal{T},\mathcal{F})$ to have exact direct limits
is that $\mathcal{F}$ is closed under direct limits, whenever the
ambient abelian category has exact direct limits. 
\end{abstract}

\maketitle
\setcounter{tocdepth}{1} \tableofcontents{}

\section{Introduction}

\newcommandx\suc[5][usedefault, addprefix=\global, 1=N, 2=M, 3=K, 4=, 5=]{#1\overset{#4}{\hookrightarrow}#2\overset{#5}{\twoheadrightarrow}#3}%

\global\long\def\Ext{\mathrm{Ext}}%

\global\long\def\Hom{\mathrm{Hom}}%

\global\long\def\Mod{\mathrm{Mod}}%

\global\long\def\mod{\mathrm{mod}}%

\global\long\def\Coker{\mathrm{Coker}}%

\global\long\def\Ker{\mathrm{Ker}}%

\global\long\def\im{\mathrm{Im}}%

\global\long\def\t{\mathfrak{t}}%

\global\long\def\s{[0]}%

\global\long\def\ss{[1]}%

Let $\mathcal{A}$ be an abelian category. Recall that $\mathcal{A}$
is Ab3 if arbitrary coproducts exist; it is Ab4 if it is Ab3 and {every}
coproduct of a set of monomorphisms is a monomorphism; and it is Ab5
if it is Ab3 and {every} colimit of a direct system of monomorphisms
is a monomorphism. The corresponding dual notions are known as Ab3{*},
Ab4{*}, and Ab5{*}. This classification of abelian categories was
introduced by Grothendieck in \cite{Ab} in order to axiomatise the
constructions of the homological algebra coming from categories of
modules.

Given $X,Y\in\mathcal{A}$, consider the `big' group of extensions
$\Ext_{\A}^{1}(X,Y)$ (see \cite[Chap. VII]{mitchell}). To get a
glimpse of the essence of these groups, recall that $\Ext_{\A}^{1}(X,Y)$
is made up of the equivalence classes of the short exact sequences
of the form $Y\hookrightarrow E\twoheadrightarrow X$. So that, for
every exact sequence $\epsilon:Y\hookrightarrow E\twoheadrightarrow X$
in $\mathcal{A}$, we will denote as $\overline{\epsilon}$ the extension
in $\Ext_{\A}^{1}(X,Y)$ representing $\epsilon$. Now, recall that
morphisms in $\mathcal{A}$ induce morphisms between extension groups
via pullback and pushout. Namely, every $f\in\Hom_{\A}(X',X)$ induces
the additive maps $\Ext_{\A}^{1}(f,Y):\Ext_{\A}^{1}(X,Y)\rightarrow\Ext_{\A}^{1}(X',Y)$,
$\overline{\eta}\mapsto\overline{\eta}\cdot f$, and $\Ext_{\A}^{1}(Y,f):\Ext_{\A}^{1}(Y,X')\rightarrow\Ext_{\A}^{1}(Y,X)$,
$\overline{\eta}\mapsto f\cdot\overline{\eta}$, for every $Y\in\mathcal{A}$.

Now, for every $A\in\mathcal{A}$ and for every coproduct $\coprod_{i\in X}B_{i}$
(resp. product $\prod_{i\in X}B_{i}$) in $\mathcal{A}$, consider
the map $\Psi:\Ext_{\A}^{1}(\coprod_{i\in X}B_{i},A)\rightarrow\prod_{i\in X}\Ext_{\A}^{1}(B_{i},A)$,
$\overline{\eta}\mapsto\left(\overline{\eta}\cdot\mu_{i}\right)_{i\in X}$
(resp. $\Phi:\Ext_{\A}^{1}(A,\prod_{i\in X}B_{i})\rightarrow\prod_{i\in X}\Ext_{\A}^{1}(A,B_{i})$,
$\overline{\eta}\mapsto\left(\pi_{i}\cdot\overline{\eta}\right)_{i\in X}$),
where $\mu_{i}:B_{i}\rightarrow\coprod_{i\in X}B_{i}$ is the $i$-th
canonical inclusion (resp. $\pi_{i}:\prod_{i\in X}B_{i}\rightarrow B_{i}$
the $i$-th projection). In \cite{argudin2021yoneda}, it was shown
that the Ab4 (resp. Ab4{*}) categories are exactly the Ab3 (resp.
Ab3{*}) categories where the maps $\Psi$ (resp. $\Phi$) are bijective.
A key point of this work is that, for every set of exact sequences
$H=\left\{ \eta_{i}:\:A\stackrel{f_{i}}{\hookrightarrow}E_{i}\twoheadrightarrow B_{i}\right\} _{i\in X}$
(resp. $H=\left\{ \eta_{i}:\:B_{i}\hookrightarrow E_{i}\stackrel{f_{i}}{\twoheadrightarrow}A\right\} _{i\in X}$),
it can be built an exact sequence 
\[
\Theta(H):\:A\rightarrow E\twoheadrightarrow\coprod_{i\in X}A_{i}\qquad(\mbox{resp. }\Theta(H):\:\prod_{i\in X}B_{i}\hookrightarrow E\rightarrow A)\mbox{,}
\]
where $E$ is the colimit (resp. limit) of a diagram defined through
the family of morphisms $\{f_{i}\}_{i\in X}$ (see \cite[Sec. 4]{argudin2021yoneda}).
It turns out that, under the right conditions, the inverse map of
$\Psi$ (resp. $\Phi$) can be exhibited using $\Theta$.

Let $\Sigma$ be a small category and $\operatorname{Fun}(\Sigma,\mathcal{A})$
be the category of functors $\Sigma\rightarrow\mathcal{A}$. Recall
that $\mathcal{A}$ is $\Sigma$-co-complete (resp. $\Sigma$-complete)
if every $F\in\operatorname{Fun}(\Sigma,\mathcal{A})$ admits a colimit
(resp. limit) in $\mathcal{A}$. In this context it is immediate to
see that one has an adjoint pair $\left(\colim(-),\kappa^{\Sigma}\right)$
(resp. $\left(\kappa^{\Sigma},\limite_{\Sigma}(-)\right)$), where
$\colim(-):\operatorname{Fun}(\Sigma,\mathcal{A})\rightarrow\mathcal{A}$
(resp. $\limite_{\Sigma}(-):\operatorname{Fun}(\Sigma,\mathcal{A})\rightarrow\mathcal{A}$)
is the colimit (resp. limit) functor, and where $\kappa^{\Sigma}:\mathcal{A}\rightarrow\operatorname{Fun}(\Sigma,\mathcal{A})$
is the functor that assigns to each $A\in\mathcal{A}$ a constant
functor in $\operatorname{Fun}(\Sigma,\mathcal{A})$.

In this article, motivated mainly by {the sequence }$\Theta$, we
will revisit the techniques used in \cite{argudin2021yoneda} to study
the exactness of {$\Sigma$-}colimits (resp. {$\Sigma$-}limits)
in {$\Sigma$-}co-complete (resp. {$\Sigma$-}complete) abelian
categories, for all small category $\Sigma$. Namely, we will consider
the canonical map 
\begin{gather*}
\Psi_{F,A}^{\Sigma}:\operatorname{Ext}_{\mathcal{A}}^{1}(\colim(F),A)\rightarrow\Ext_{\Fun(\Sigma,\A)}^{1}(F,\kappa^{\Sigma}(A))\\
\left(\mbox{resp. }\Phi_{F,A}^{\Sigma}:\operatorname{Ext}_{\mathcal{A}}^{1}(A,\limite_{\Sigma}(F))\rightarrow\Ext_{\Fun(\Sigma,\A)}^{1}(\kappa^{\Sigma}(A),F)\right)
\end{gather*}
for every $A\in\mathcal{A}$ and $F\in\operatorname{Fun}(\Sigma,\mathcal{A})$
(see Remark \ref{rem:defPsi} and Corollary \ref{cor:lim-exac}).
Looking for sufficient conditions for $\Psi_{F,A}^{\Sigma}$ (resp.
$\Phi_{F,A}^{\Sigma}$) to be bijective, we will see that every exact
sequence $\eta:\:\kappa^{\Sigma}(A)\hookrightarrow E\twoheadrightarrow F$
(resp. $\eta:\:F\hookrightarrow E\twoheadrightarrow\kappa^{\Sigma}(A)$)
in $\operatorname{Fun}(\Sigma,\mathcal{A})$ induces an exact sequence
$\kappa^{\Sigma_{*}}(A)\overset{f_{\eta}}{\rightarrow}E_{\eta}\twoheadrightarrow F'$
(resp. $F'\hookrightarrow E_{{\eta}}\overset{f_{\eta}}{\rightarrow}{\kappa^{\Sigma_{*}}(A)}$)
in $\operatorname{Fun}(\Sigma_{*},\mathcal{A})$, where $\Sigma_{*}$
is the one-point extension of $\Sigma$ that adds a source (resp.
sink) point $*$ (see Section 3), and $F'$ is a functor such that
$F'\restriction_{\Sigma}=F$ and $F'(*)=0$.

Our main result will be that, in a {$\Sigma$-}co-complete (resp.
{$\Sigma$-}complete) category $\mathcal{A}$, $\colim(-)$ (resp.
$\limite_{\Sigma}(-)$) is exact if and only if $f_{\eta}$ is a monomorphism
(resp. epimorphism) $\forall\overline{\eta}\in\Ext_{\Fun(\Sigma,\A)}^{1}(\kappa^{\Sigma}(A),F)$
(resp. $\forall\overline{\eta}\in\Ext_{\Fun(\Sigma,\A)}^{1}(F,\kappa^{\Sigma}(A))$),
which in turn will be equivalent to $\Psi_{F,A}^{\Sigma}$ (resp.
$\Phi_{F,A}^{\Sigma}$) being bijective $\forall A\in{\mathcal{A}}$
and $\forall F\in\operatorname{Fun}(\Sigma,\mathcal{A})$. Moreover,
in such case, it can be seen that $\Psi_{-,?}^{\Sigma}$ (resp. $\Phi_{-,?}^{\Sigma}$)
is a natural isomorphism {between the respective functors}. This
phenomenon is what we call an Ext-adjoint pair. An application of
our results consists in reproducing with new proofs the following
known results in abelian categories. 
\begin{enumerate}
\item \cite[Cor. 3.2.9]{Popescu} Let $\A$ be an Ab3 (resp. Ab3{*}) abelian
category. Then, $\A$ is Ab4 (resp. Ab4{*}) whenever $\A$ has enough
injectives {(resp. projectives)}. 
\item \cite[Thms 4.9 and 4.11]{argudin2021yoneda} Let $\A$ be an Ab3 (resp.
Ab3{*}) abelian category. Then, $\A$ is Ab4 (resp. Ab4{*}) if, and
only if, the canonical map 
\begin{gather*}
\text{Ext}_{\A}^{1}(\coprod_{i\in I}A_{i},A)\to\prod_{i\in I}\text{Ext}_{\A}^{1}(A_{i},A)\\
\left(\mbox{resp. }\text{Ext}_{\B}^{1}(B,\prod_{i\in I}B_{i})\to\prod_{i\in I}\text{Ext}_{\B}^{1}(B,B_{i})\right)
\end{gather*}
is always an isomorphism {of `big' abelian groups}. 
\item \cite[Cor. 1.4]{R} Let $\mathcal{A}$ be an Ab3{*} abelian category.
If $\mathcal{A}$ has enough projective effacements, then $\mathcal{A}$
is Ab4{*}. 
\item \cite[Thm. 4.8]{PS1} Let $\mathcal{A}$ be an Ab3{*} and Ab5 abelian
category. For a torsion pair $\t=(\mathcal{T},\mathcal{F})$ in $\mathcal{A}$,
consider the abelian category $\mathcal{H}_{\t}$ defined as the heart
of the Happel-Reiten-Smal{\o} t-structure associated to $\t$. If
$\mathcal{H}_{\t}$ is Ab5, then $\mathcal{F}$ is closed under direct
limits. 
\end{enumerate}
It is worth to point out that, we only know of two other proofs of
the statement in item (c) in the context of Grothendieck categories
(see \cite[Thm. B]{AP}, \cite[Thm. A]{B} or \cite[Cor. 1.4]{R});
and that this paper presents the third proof that can be found in
the literature of item (d) (see \cite[Thm. 4.8]{PS1} and \cite[Thm. 4.18]{PS5}).

The article is organized as follows. In Section 2 we recall the preliminaries
on abelian categories and (co)limits in abelian categories that we
will need throughout the article. In Section 3 we show how the one-point
extension of a small category $\Sigma$ is related to the behavior
of the colimits on the category $\Sigma$. Finally, Section 4 contains
our main results.

\bigskip{}

\section{Preliminaries}

\noindent In this section we recall some definitions and results,
and we fix some notation that we will use throughout the paper. For
more precise details, the reader is referred to \cite{Popescu,ringsofQuotients,maclane}.

\subsection{Limits and colimits}

Recall that a category is \textbf{small} when the isomorphism classes
of its objects form a set. In this sense, every set $\Sigma$ can
be viewed as a small category whose objects are the elements of $\Sigma$
and the only morphisms are the identity morphisms. Also, if $\Sigma$
is a directed (resp. codirected) set, then $\Sigma$ can be viewed
as a small category whose objects are the elements of $\Sigma$ and
there is a unique morphism $\alpha\to\beta$ exactly when $\alpha\leq\beta$
(resp. $\beta\leq\alpha$).

In what follows $\Sigma$ will denote a small category. If $\C$ is
a category, then a functor $\Sigma\to\C$ will be called a \textbf{$\Sigma$-diagram}
on $\C$. In this setting, the $\Sigma$-diagrams on $\C$ together
with the respective natural transformations form a category, which
will be denoted by $\Fun(\Sigma,\C).$ In particular, if $\mathcal{C}$
is an abelian category, then $\Fun(\Sigma,\mathcal{C})$ is an abelian
category in the obvious way (see \cite[Chapter IV, Section 7]{ringsofQuotients}).

Observe that the assignment $C\mapsto\kappa^{\Sigma}(C)$ gives a
functor $\kappa^{\Sigma}:\C\to\Fun(\Sigma,\C)$, where $\kappa^{\Sigma}(C)$
is the $\Sigma$-diagram on $\C$ such that $i\mapsto C$ and $\lambda\mapsto1_{C}$,
for each object $i$ and morphism $\lambda$ of $\Sigma$; and, for
every morphism $f:A\rightarrow B$ in $\mathcal{C}$, $\kappa^{\Sigma}(f):\kappa^{\Sigma}(A)\rightarrow\kappa^{\Sigma}(B)$
is the natural transformation $\kappa_{f}^{\Sigma}$ given by the
family of morphisms $(\kappa_{f,s}^{\Sigma})_{s\in\Sigma}$ with $\kappa_{f,s}^{\Sigma}=f$
for all $s\in\Sigma$. Such functor is called the \textbf{constant
diagram functor}.

Let $F\in\Fun(\Sigma,\mathcal{C})$, $C,L\in\mathcal{C}$, and $\rho:F\to\kappa^{\Sigma}(C)$,
$\varrho:\kappa^{\Sigma}(L)\to F$ be natural transformations. Then,
we will say that the pair $(C,\rho)$ (resp. $(L,\varrho)$) is a
\textbf{$\Sigma$-colimit} (resp. \textbf{$\Sigma$-limit}) of $F$
when the following condition holds: for each natural transformation
$\tau:{F}\to\kappa^{\Sigma}(D)$ (resp. $\tau:\kappa^{\Sigma}(D)\to{F}$),
where $D$ is an object in $\C$, there is a unique morphism $f:C\to D$
(resp. $f:D\to L$) such that the following diagram commutes.

\[
\xymatrix{ & \kappa^{\Sigma}(C)\ar[dd]^{\kappa_{f}^{\Sigma}} &  &  & \kappa^{\Sigma}(L)\ar[dl]_{\varrho}\\
F\ar[dr]^{\tau}\ar[ur]^{\rho} &  & (resp.) & F\\
 & \kappa^{\Sigma}(D) &  &  & \kappa^{\Sigma}(D)\ar[ul]^{\tau}\ar[uu]_{\kappa_{f}^{\Sigma}}
}
\]

If $(C,\rho)$ (resp. $(L,\varrho)$) is a $\Sigma$-colimit (resp.
$\Sigma$-limit) of $F$, we use the notation $C=\col_{\Sigma}(F)$
(resp. $L=\limite_{\Sigma}(F)$). Now, if each $\Sigma$-diagram on
$\C$ has a $\Sigma$-colimit (resp. $\Sigma$-limit), we say that
$\C$ \textbf{admits $\Sigma$-colimits} (resp. \textbf{$\Sigma$-limits})
{or that it is \textbf{$\Sigma$-co-complete} (resp. \textbf{$\Sigma$-complete})}.
In this case, the assignment $F\mapsto\col_{\Sigma}(F)$ (resp. $F\mapsto\limite_{\Sigma}(F)$)
gives rise to a functor $\col_{\Sigma}:\Fun(\Sigma,\C)\to\C$ (resp.
$\limite_{\Sigma}:\Fun(\Sigma,\C)\to\C$) which is the left (resp.
right) adjoint to the {associated} constant diagram functor. The
category $\C$ is called \textbf{co-complete} (resp. \textbf{complete})
when it admits $\Sigma$-colimits (resp. $\Sigma$-limits), for every
small category $\Sigma$. In this context, for every functor $F:\Sigma\to\mathcal{C}$,
we denote by $C_{F}:=\col_{\Sigma}(F)$ (resp. $L_{F}:=\lim_{\Sigma}(F)$)
and by $\rho^{F}:F\to\kappa^{\Sigma}(C_{F})$ (resp. $\varrho^{F}:\kappa^{\Sigma}(L_{F})\to F$)
the associated natural transformation.

Lastly, if $\mathcal{C}$ admits $\Sigma$-colimits (resp. $\Sigma$-limits),
observe that for each $A\in\A$ there is a unique morphism $\nabla^{A}:C_{\kappa^{\Sigma}(A)}\to A$
(resp. $\Delta^{A}:A\to L_{\kappa^{\Sigma}(A)}$) such that $\kappa_{\nabla^{A}}^{\Sigma}\circ\rho^{\kappa^{\Sigma}(A)}=1_{\kappa^{\Sigma}(A)}$
(resp. $\varrho^{\kappa^{\Sigma}(A)}\circ\kappa_{\Delta^{A}}^{\Sigma}=1_{\kappa^{\Sigma}(A)}$).
We refer to such morphism as the \textbf{co-diagonal morphism} (resp.
\textbf{diagonal morphism}).

\subsection{Abelian categories}

In what follows $\A$ will denote an abelian category. Recall the
following hierarchy among abelian categories (introduced by Grothendieck
in \cite{Ab}): we say that $\A$ is 
\begin{itemize}
\item \textbf{Ab3} if all set-indexed coproducts exist in $\A$ (equivalently,
if it is co-complete); 
\item \textbf{Ab4 }if it is Ab3 and the functors $\col_{\Sigma}$ are exact,
for each set $\Sigma$ viewed as a small category; 
\item \textbf{Ab5 }if it is Ab3 and the functors $\col_{\Sigma}$ are exact,
for each directed set $\Sigma$ viewed as a small category. 
\end{itemize}
We will denote by \textbf{Abn{*}} to the dual definition of \textbf{Abn}
for each $n$ in $\{3,4,5\}$.



\section{A one-point extension of a small category}

{In this section, $\Sigma$ will denote a small category and $\A$
will denote a $\Sigma$-co-complete abelian category. Now, f}rom
$\Sigma$ we can construct a new category $\Sigma_{*}$ which essentially
consists of adding a source point to $\Sigma$. That is, $\Sigma_{*}$
is the category consisting of the objects of $\Sigma$ plus a new
object $*$, and the morphisms of $\Sigma_{*}$ are the morphisms
of $\Sigma$ plus a single morphism from $*$ to any other point of
$\Sigma_{*}$. More precisely, we define the category $\Sigma_{*}$
as follows: 
\begin{itemize}
\item $Ob(\Sigma_{*}):=Ob(\Sigma)\cup\{*\}$; 
\item Let $(\alpha_{i})_{i\in Ob(\Sigma)}$ be an $Ob(\Sigma)$-indexed
family of symbols, where $\alpha_{i}=\alpha_{j}$ with $i,j\in Ob(\Sigma)$,
when $i$ is isomorphic to $j$ in $\Sigma$. In this setting, we
put $\Sigma_{*}(i,*):=\emptyset$, $\Sigma_{*}(*,i):=\{\alpha_{i}\}$,
$\Sigma_{*}(*,*):=\{1_{*}\}$ and $\Sigma_{*}(i,j)=\Sigma(i,j)$ for
every $i$ and $j$ objects in $\Sigma$. 
\item The composition is given by the extension of the composition in the
category $\Sigma$ following the rule: $1_{*}\circ1_{*}:=1_{*}$,
$\alpha_{i}\circ1_{*}:=\alpha_{i}$ and $\lambda\circ\alpha_{i}:=\alpha_{j}$,
for all $i,j$ and $\lambda:i\to j$, objects and morphism in $\Sigma$,
respectively. 
\end{itemize}

\subsection{A one-point extension of a $\Sigma$-diagram}

Let $\eta:\;\kappa^{\Sigma}(A)\stackrel{\phi}{\hookrightarrow}F\stackrel{\psi}{\twoheadrightarrow}G$
be an exact sequence in $\Fun(\Sigma,\A)$. Define the functor $F_{\eta}:\Sigma_{*}\to\A$
as follows: 
\begin{itemize}
\item For each $i\in Ob(\Sigma)$ we assign the object $F(i)$, and to the
object $*$ we assign the object $A$. 
\item For each morphism $\lambda$ in $\Sigma$ we assign the morphism $F(\lambda)$
and, for each $\alpha_{i}$ we assign the morphism $\phi_{i}$. Lastly,
for the morphism $1_{*}$ we assign the morphism $1_{A}$. 
\end{itemize}
It can be shown that there is an exact sequence $\eta':\;\kappa^{\Sigma_{*}}(A)\stackrel{\phi'}{\hookrightarrow}F_{\eta}\stackrel{\psi'}{\twoheadrightarrow}G'$
{in $\Fun(\Sigma_{*},\A)$}, where $G'$ is a functor that extends
$G$ such that $G'(*)=0$. In order to find the colimit of such exact
sequence, we proceed as follows.

By the right exactness of the functor $\col_{\Sigma}:\Fun(\Sigma,\A)\to\A$
(see \cite[Chapter V, Section 1]{ringsofQuotients}), there is an
exact sequence in $\A$: 
\[
\xymatrix{C_{\kappa^{\Sigma}(A)}\ar[rr]^{\col_{\Sigma}(\phi)} &  & C_{F}\ar[rr]^{\col_{\Sigma}(\psi)} &  & C_{G}\ar[r] & 0.}
\]
Consider the following commutative diagram in $\A$ with exact rows,
where the left square is the pushout of $\col_{\Sigma}(\phi)$ and
the co-diagonal morphism. 
\[
\xymatrix{C_{\kappa^{\Sigma}(A)}\ar[rr]^{\col_{\Sigma}(\phi)}\ar[d]_{\nabla^{A}} &  & C_{F}\ar[rr]^{\col_{\Sigma}(\psi)}\ar[d]_{\mu_{\eta}} &  & C_{G}\ar[r]\ar@{=}[d] & 0 & (1.1)\\
A\ar[rr]^{f_{\eta}} &  & Z_{\eta}\pullbackcorner\ar[rr]^{g_{\eta}} &  & C_{G}\ar[r] & 0
}
\]

Lastly, consider the morphism $\xi^{\eta}:F_{\eta}\to\kappa^{\Sigma_{*}}(Z_{\eta})$
in $\Fun(\Sigma_{*},\A)$ given by $\xi_{i}^{\eta}:=\mu_{\eta}\circ\rho_{i}^{F}$
for each $i\in Ob(\Sigma)$ and $\xi_{*}^{\eta}:=f_{\eta}$. Notice
that $\xi^{\eta}$ is well-defined since $\mu_{\eta}\circ\rho_{j}^{F}\circ F_{\eta}(\lambda)=\mu_{\eta}\circ\rho_{j}^{F}\circ F(\lambda)=\mu_{\eta}\circ\rho_{i}^{F}$
for each $\lambda:i\to j$ morphism in $\Sigma$, and the following
diagram in $\A$ is commutative for all $i\in Ob(\Sigma)$: 
\[
\xymatrix{F_{\eta}(*)=A\ar[rr]^{\xi_{*}^{\eta}=f_{\eta}}\ar[d]_{F_{\eta}(\alpha_{i})=\phi_{i}} &  & (\kappa^{\Sigma}(Z_{\eta}))(*)=Z_{\eta}\ar[d]^{(\kappa^{\Sigma}(Z_{\eta}))(\alpha_{i})=1_{Z_{\eta}}}\\
F_{\eta}(i)=F(i)\ar[dr]_{\rho_{i}^{F}}\ar[rr]^{\xi_{i}^{\eta}} &  & \kappa^{\Sigma}(Z_{\eta})(i)=Z_{\eta}\\
 & C_{F}\ar[ur]_{\mu_{\eta}}
}
\]
Indeed, such claim follows from the following equalities of morphisms.
\[
\begin{aligned}\mu_{\eta}\circ(\rho_{i}^{F}\circ\phi_{i}) & =\mu_{\eta}\circ(\col_{\Sigma}(\phi)\circ\rho_{i}^{\kappa^{\Sigma}(A)}) & (\text{by definition of \ensuremath{\col_{\Sigma}(\phi)}})\\
 & =(f_{\eta}\circ\nabla^{A})\circ\rho_{i}^{\kappa^{\Sigma}(A)} & \text{(By (1.1))}\\
 & =f_{\eta}\circ1_{A}=f_{\eta}=1_{Z_{\eta}}\circ f_{\eta}
\end{aligned}
\]

{The following result shows that the morphism $f_{\eta}$ coincide
with the left morphism in the sequence $\Theta(\eta)$ described in
the introduction (see \cite{argudin2021yoneda}) when $\Sigma$ is
a set.} 
\begin{lem}
Let $\eta:\;\kappa^{\Sigma}(A)\stackrel{\phi}{\hookrightarrow}F\stackrel{\psi}{\twoheadrightarrow}G$
be an exact sequence in $\Fun(\Sigma,\A)$, where $\Sigma$ is a small
category and $A$ is an object in $\A$, and let $Z_{\eta}$ be as
in the diagram (1.1). Then, the pair $(Z_{\eta},\xi^{\eta})$ is the
$\Sigma_{*}$-colimit of $F_{\eta}$. In particular, $Z_{\eta}=\col_{\Sigma_{*}}(F_{\eta})$. 
\end{lem}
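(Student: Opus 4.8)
The plan is to verify the universal property of the $\Sigma_*$-colimit directly for the pair $(Z_\eta,\xi^\eta)$. So let $D\in\A$ and let $\tau:F_\eta\to\kappa^{\Sigma_*}(D)$ be a natural transformation; I must produce a unique $h:Z_\eta\to D$ with $\kappa^{\Sigma_*}_h\circ\xi^\eta=\tau$. First I would restrict $\tau$ to $\Sigma$: the family $(\tau_i)_{i\in Ob(\Sigma)}$ is a natural transformation $F=F_\eta|_\Sigma\to\kappa^\Sigma(D)$, so by the universal property of $(C_F,\rho^F)$ there is a unique $t:C_F\to D$ with $t\circ\rho_i^F=\tau_i$ for all $i\in Ob(\Sigma)$.

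Next I would use the compatibility of $\tau$ with the morphisms $\alpha_i$ and $1_*$. Naturality of $\tau$ on $\alpha_i:*\to i$ gives $\tau_i\circ\phi_i=\tau_*$ for every $i$ (using $F_\eta(\alpha_i)=\phi_i$ and that $\kappa^{\Sigma_*}(D)$ sends $\alpha_i$ to $1_D$); in particular $\tau_*=\tau_i\circ\phi_i$ is independent of $i$, which is automatic. Composing with $\rho^{\kappa^\Sigma(A)}$ and using $\rho_i^F\circ\phi_i=\col_\Sigma(\phi)\circ\rho_i^{\kappa^\Sigma(A)}$ (definition of $\col_\Sigma(\phi)$) together with $\nabla^A\circ\rho_i^{\kappa^\Sigma(A)}=1_A$, one checks that $t\circ\col_\Sigma(\phi)$ and $\tau_*\circ\nabla^A$ agree after precomposition with each $\rho_i^{\kappa^\Sigma(A)}$, hence $t\circ\col_\Sigma(\phi)=\tau_*\circ\nabla^A$ by uniqueness in the universal property of $C_{\kappa^\Sigma(A)}$. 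This is exactly the commutativity needed to invoke the universal property of the pushout square on the left of diagram $(1.1)$: the cocone $(t:C_F\to D,\ \tau_*:A\to D)$ over $(\col_\Sigma(\phi),\nabla^A)$ factors uniquely through $Z_\eta$, yielding a unique $h:Z_\eta\to D$ with $h\circ\mu_\eta=t$ and $h\circ f_\eta=\tau_*$.

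It then remains to check that this $h$ does the job and is the only one. For $i\in Ob(\Sigma)$ we have $h\circ\xi_i^\eta=h\circ\mu_\eta\circ\rho_i^F=t\circ\rho_i^F=\tau_i$, and for $*$ we have $h\circ\xi_*^\eta=h\circ f_\eta=\tau_*$; so $\kappa^{\Sigma_*}_h\circ\xi^\eta=\tau$. For uniqueness, if $h'$ also satisfies this, then $h'\circ\mu_\eta\circ\rho_i^F=\tau_i=t\circ\rho_i^F$ for all $i$ forces $h'\circ\mu_\eta=t$ by uniqueness for $C_F$, and $h'\circ f_\eta=\tau_*$; the uniqueness clause of the pushout property then gives $h'=h$. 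The last assertion $Z_\eta=\col_{\Sigma_*}(F_\eta)$ is just the notational convention recorded in Section 2.

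I do not expect a serious obstacle here; the argument is a diagram chase assembling two universal properties (the $\Sigma$-colimit of $F$ and the pushout) into the $\Sigma_*$-colimit. The one point requiring a little care — and the natural candidate for "the main obstacle" — is the verification that the restriction-to-$\Sigma$ data of an arbitrary cocone $\tau$ on $F_\eta$ really satisfies $t\circ\col_\Sigma(\phi)=\tau_*\circ\nabla^A$, i.e. that the cocone $(t,\tau_*)$ is a genuine cocone under the pushout diagram; this uses the definition of $\col_\Sigma(\phi)$ and the defining property $\nabla^A\circ\rho^{\kappa^\Sigma(A)}_i=1_A$ of the co-diagonal, exactly as in the displayed chain of equalities preceding the lemma. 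Everything else is bookkeeping with the explicit description of $\Sigma_*$ and of $F_\eta$.
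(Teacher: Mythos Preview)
Your proof is correct and follows essentially the same approach as the paper's own proof: restrict an arbitrary cocone on $F_\eta$ to $\Sigma$, factor through $C_F$ via the colimit universal property, verify the resulting pair $(t,\tau_*)$ is a cocone under the pushout diagram by checking agreement after precomposition with each $\rho_i^{\kappa^\Sigma(A)}$, then invoke the pushout universal property for existence and uniqueness. The structure, the key computations, and even the handling of the ``main obstacle'' you flag all match the paper's argument step for step, differing only in variable names.
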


\begin{proof}
Let $C$ be an object in $\A$ and let $\varphi:F_{\eta}\to\kappa^{\Sigma_{*}}(C)$
be a natural transformation in $\Fun(\Sigma_{*},\A)$. By definition,
we have that $\varphi_{j}\circ F(\lambda)=\varphi_{j}\circ F_{\eta}(\lambda)=\varphi_{i}$,
for every morphism $\lambda:i\to j$ in $\Sigma$, and the following
commutative diagram in $\A$, for all $i\in I$: 
\[
\xymatrix{F_{\eta}(*)=A\ar[r]^{\varphi_{*}}\ar[d]_{F_{\eta}(\alpha_{i})=\phi_{i}} & (\kappa^{\Sigma}(C))(*)=C\ar[d]^{(\kappa^{\Sigma}(C))(\alpha_{i})=1_{C}}\\
F_{\eta}(i)=F(i)\ar[r]^{\varphi_{i}} & (\kappa^{\Sigma}(C))(i)=C
}
\]
In particular, we get that the family of morphisms $(\varphi_{i})_{i\in Ob(\Sigma)}:=\varphi\restriction_{Ob(\Sigma)}$
is a natural transformation in $\Fun(\Sigma,\A)$. Now, from the universal
property of colimits, we obtain a unique morphism $\pi:C_{F}\to C$
in $\A$ such that the equality $\kappa_{\pi}^{\Sigma}\circ\rho^{F}=\varphi\restriction_{Ob(\Sigma)}$
holds in $\Fun(\Sigma,\A)$. In particular, we get 
\[
\begin{aligned}(\varphi_{*}\circ\nabla^{A})\circ\rho_{i}^{\kappa^{\Sigma}(A)} & =\varphi_{*}\circ1_{A}\\
 & =\varphi_{i}\circ\phi_{i}\\
 & =(\pi\circ\rho_{i}^{F})\circ\phi_{i}\\
 & =(\pi\circ\col_{\Sigma}(\phi))\circ\rho_{i}^{\kappa^{\Sigma}(A)}
\end{aligned}
\]
for all $i\in Ob(\Sigma).$ Using once again the universal property
of colimits we deduce that $\varphi_{*}\circ\nabla^{A}=\pi\circ\col_{\Sigma}(\phi)$.
Now, from the universal property of pushouts, we know that there is
a unique morphism $\sigma:Z_{\eta}\to C$ such that $\sigma\circ f_{\eta}=\varphi_{*}$
and $\sigma\circ\mu_{\eta}=\pi$. And hence, it follows that the natural
transformation $\kappa_{\sigma}^{\Sigma{_{*}}}$ satisfy the following
equation in the category $\Fun(\Sigma_{*},\A)$: 
\[
\kappa_{\sigma}^{\Sigma{_{*}}}\circ\xi^{\eta}=\varphi.
\]

It remains to prove the uniqueness of $\sigma$. Let $\sigma':Z_{\eta}\to C$
be a morphism in $\A$ such that $\kappa_{\sigma'}^{\Sigma{_{*}}}\circ\xi^{\eta}=\varphi$.
Then, $\varphi_{*}=(\kappa_{\sigma'}^{\Sigma{_{*}}})_{*}\circ\xi_{*}^{\eta}=\sigma'\circ f_{\eta}$
and $\varphi_{i}=(\kappa_{\sigma'}^{\Sigma{_{*}}})_{i}\circ\xi_{i}^{\eta}=\sigma'\circ(\mu_{\eta}\circ\rho_{i}^{F})=(\sigma'\circ\mu_{\eta})\circ\rho_{i}^{F}$,
for all $i\in Ob(\Sigma)$. Thus, $\kappa_{\sigma'\circ\mu_{\eta}}^{\Sigma}\circ\rho^{F}=\varphi\restriction_{Ob(\Sigma)}$.
And hence, from the universal property of colimits, $\sigma'\circ\mu_{\eta}=\pi$.
Finally, by the universal property of pushouts, we have $\sigma=\sigma'$
as desired. 
\end{proof}

\section{Main results}

In this section we will use the previous results to obtain a characterization
for the exactness of the functors $\colim(-):\Fun(\Sigma,\mathcal{A})\rightarrow\mathcal{A}$
and $\limite_{\Sigma}(-):\Fun(\Sigma,\mathcal{B})\rightarrow\mathcal{B}${,
when $\A$ and $\B$ are $\Sigma$-co-complete and $\Sigma$-complete
abelian categories, respectively, for all small category $\Sigma$}.


\begin{thm}
\label{teo:firstmain} {Let $\Sigma$ be a small category and let
$\A$ be a $\Sigma$-co-complete abelian category. Then, the following
conditions are equivalent:} 
\begin{enumerate}
\item the functor $\col_{\Sigma}:\Fun(\Sigma,\A)\to\A$ is exact; 
\item $f_{\eta}$ is a monomorphism, for every exact sequence $\eta:\;\kappa^{\Sigma}(A)\stackrel{\phi}{\hookrightarrow}F\stackrel{\psi}{\twoheadrightarrow}G$
in $\Fun(\Sigma,\A)$, where $A$ is an object in $\A$. 
\end{enumerate}
\end{thm}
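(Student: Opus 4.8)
The plan is to rest everything on a handful of standard facts: $\col_\Sigma$ is a left adjoint, hence additive and right exact, so that its exactness is equivalent to the preservation of monomorphisms; in an abelian category the pushout of a monomorphism along an arbitrary morphism is again a monomorphism, and more precisely, in a pushout square with top edge $a$, left edge $b$ and bottom edge $a'$ one has $\Ker(a')=b(\Ker a)$ (the computation already built into the construction of $Z_\eta$ in (1.1)); and, being a left adjoint, $\col_\Sigma$ preserves pushouts. I would also record at the outset that the universal cocone $\rho^{F}\colon F\to\kappa^{\Sigma}(C_F)$ is the unit of the adjunction $\col_\Sigma\dashv\kappa^{\Sigma}$ and that the co-diagonal $\nabla^{A}\colon C_{\kappa^{\Sigma}(A)}\to A$ is its counit, so that the relevant triangle identity reads $\nabla^{C_F}\circ\col_\Sigma(\rho^{F})=1_{C_F}$.

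The implication $(1)\Rightarrow(2)$ is then immediate: given $\eta$ as in (2), exactness of $\col_\Sigma$ makes $\col_\Sigma(\phi)$ a monomorphism, and since the left square of (1.1) presents $f_\eta$ as the pushout of $\col_\Sigma(\phi)$ along $\nabla^{A}$, the kernel formula gives $\Ker(f_\eta)=\nabla^{A}(\Ker\col_\Sigma(\phi))=0$, so $f_\eta$ is a monomorphism.

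For $(2)\Rightarrow(1)$, which is the core of the argument, I would fix an arbitrary monomorphism $\chi\colon F'\hookrightarrow F$ in $\Fun(\Sigma,\A)$ (enough by right exactness) and manufacture from it an instance of (2). Put $A:=\col_\Sigma(F')$ and form the pushout $\widetilde F:=F\sqcup_{F'}\kappa^{\Sigma}(A)$ of $\chi$ along the universal cocone $\rho^{F'}\colon F'\to\kappa^{\Sigma}(A)$ inside $\Fun(\Sigma,\A)$; the induced map $\phi\colon\kappa^{\Sigma}(A)\to\widetilde F$ is then a monomorphism (pushout of a monomorphism) with cokernel $G:=\Coker(\chi)=\Coker(\phi)$, so $\eta\colon\kappa^{\Sigma}(A)\stackrel{\phi}{\hookrightarrow}\widetilde F\twoheadrightarrow G$ is an exact sequence of the type appearing in (2), and hypothesis (2) yields that $f_\eta$ is a monomorphism. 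It then remains to identify $\Ker(f_\eta)$ with $\Ker\col_\Sigma(\chi)$. Applying $\col_\Sigma$ to the pushout square defining $\widetilde F$ exhibits $\col_\Sigma(\phi)$ as the pushout of $\col_\Sigma(\chi)$ along $\col_\Sigma(\rho^{F'})$, so the kernel formula gives $\Ker\col_\Sigma(\phi)=\col_\Sigma(\rho^{F'})\big(\Ker\col_\Sigma(\chi)\big)$; applying the same formula to the left square of (1.1) for this $\eta$ gives $\Ker(f_\eta)=\nabla^{A}\big(\Ker\col_\Sigma(\phi)\big)=\big(\nabla^{A}\circ\col_\Sigma(\rho^{F'})\big)\big(\Ker\col_\Sigma(\chi)\big)$, and by the triangle identity $\nabla^{A}\circ\col_\Sigma(\rho^{F'})=1_{A}$ this collapses to $\Ker(f_\eta)=\Ker\col_\Sigma(\chi)$. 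Since $f_\eta$ is a monomorphism this kernel vanishes, hence $\col_\Sigma(\chi)$ is a monomorphism, as desired.

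The step I expect to be the genuine obstacle — and the single idea that makes the proof work — is the choice of $\widetilde F$: one must turn an \emph{arbitrary} subobject $F'\subseteq F$ into a \emph{constant} subobject, and the correct way to do this is to push out along the universal cocone $\rho^{F'}$ specifically, since only then is the triangle identity available to cancel the co-diagonal $\nabla^{A}$ and recover exactly $\Ker\col_\Sigma(\chi)$; pushing out along an arbitrary cocone would leave an uncontrolled factor of $\nabla^{A}$ in front. Everything else — that $\col_\Sigma$ is additive and right exact, that the pushout $\widetilde F$ exists in $\Fun(\Sigma,\A)$ and is preserved by $\col_\Sigma$, that $\Coker(\phi)=\Coker(\chi)$, and the elementary ``kernel of a pushout'' formula — is routine and can be dispatched with brief references.
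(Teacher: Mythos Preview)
Your proof is correct, and the key construction---pushing out an arbitrary monomorphism $\chi\colon F'\hookrightarrow F$ along the unit $\rho^{F'}\colon F'\to\kappa^{\Sigma}(C_{F'})$ to produce an exact sequence of the form required in condition~(b)---is exactly the one the paper uses (with $H,N,\zeta$ in place of your $F',F,\chi$). Where you diverge is in how you extract the conclusion. The paper verifies, through a rather long chain of universal-property arguments, that $Z_{\eta}$ is a colimit of $N$, builds explicit mutually inverse morphisms $\theta\colon Z_{\eta}\to C_{N}$ and $m\colon C_{N}\to Z_{\eta}$, and finally reads off $\col_{\Sigma}(\zeta)=\theta\circ f_{\eta}$. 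You replace all of this by two structural observations: (i) $\col_{\Sigma}$, being a left adjoint, preserves pushouts, so applying it to the defining square of $\widetilde F$ and pasting with the square~(1.1) yields a pushout rectangle whose left edge is $\nabla^{A}\circ\col_{\Sigma}(\rho^{F'})$; (ii) the triangle identity forces this edge to be the identity on $A=C_{F'}$, so the outer pushout identifies $f_{\eta}$ with $\col_{\Sigma}(\chi)$ up to isomorphism (or, equivalently, your kernel computation goes through). Your route is shorter and more conceptual; the paper's route is more elementary in that it never invokes preservation of pushouts or the triangle identities explicitly, only the bare universal properties of colimits and pushouts one square at a time. Both reach the same endpoint: $f_{\eta}$ and $\col_{\Sigma}(\chi)$ differ by an isomorphism, so one is monic iff the other is.
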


\begin{proof}
(a) $\Rightarrow$ (b). It is clear by properties of the pushouts
(see (1.1)).

(b) $\Rightarrow$ (a). Let $H\stackrel{\zeta}{\hookrightarrow}N\stackrel{\chi}{\twoheadrightarrow}G$
be an exact sequence in $\Fun(\Sigma,\A)$. Consider the following
commutative diagram in $\Fun(\Sigma,A)$ with exact rows, where the
left square is the pushout of $\zeta$ and $\rho^{H}$ 
\[
\xymatrix{0\ar[r] & H\ar[r]^{\zeta}\ar[d]_{\rho^{H}} & N\ar[r]^{\chi}\ar[d]^{\omega} & G\ar[r]\ar@{=}[d] & 0\\
0\ar[r] & \kappa^{\Sigma}(C_{H})\ar[r]^{\phi} & F\ar[r]^{\psi}\pullbackcorner & G\ar[r] & 0
}
\]
From the universal property of pushouts, we obtain a unique natural
transformation $\upsilon:F\to\kappa^{\Sigma}(C_{N})$ such that $\upsilon\circ\omega=\rho^{N}$
and $\upsilon\circ\phi=\kappa_{\col_{\Sigma}(\zeta)}^{\Sigma}$ since
$\rho^{N}\circ\zeta=\kappa_{\col_{\Sigma}(\zeta)}^{\Sigma}\circ\rho^{H}$.
Note that there is a unique morphism $\gamma:C_{F}\to C_{N}$ such
that $\kappa_{\gamma}^{\Sigma}\circ\rho^{F}=\upsilon$ by the universal
property of colimits.

Now, set $\eta:=\kappa^{\Sigma}(C_{H})\stackrel{\phi}{\hookrightarrow}F\stackrel{\psi}{\twoheadrightarrow}G$
and, for each $i\in Ob(\Sigma)$, consider the following commutative
diagram in $\A$ with exact rows, where the upper and lower left squares
are pushouts (here $f_{\eta}$ is a monomorphism by the assumption
(b)): 
\[
\xymatrix{0\ar[r] & H(i)\ar[rr]^{\zeta_{i}}\ar[d]_{\rho_{i}^{H}} &  & N(i)\ar[d]_{\omega_{i}}\ar[rr]^{\chi_{i}} &  & G(i)\ar[r]\ar@{=}[d] & 0\\
0\ar[r] & C_{H}\ar[d]_{\rho_{i}^{\kappa^{\Sigma}(C_{H})}}\ar[rr]^{\phi_{i}} &  & F(i)\ar[rr]^{\psi_{i}}\ar[d]_{\rho_{i}^{F}}\pullbackcorner &  & G(i)\ar[r]\ar[d]_{\rho_{i}^{G}} & 0\\
 & C_{\kappa^{\Sigma}(C_{H})}\ar[d]_{\nabla^{C_{H}}}\ar[rr]^{\col_{\Sigma}(\phi)} &  & C_{F}\ar[d]_{\mu_{\eta}}\ar[rr]^{\col_{\Sigma}(\psi)} &  & C_{G}\ar[r]\ar@{=}[d] & 0\\
0\ar[r] & C_{H}\ar[rr]^{f_{\eta}} &  & Z_{\eta}\ar[rr]^{g_{\eta}}\pullbackcorner &  & C_{G}\ar[r] & 0
}
\]
In particular, we have the following equalities

\[
\begin{aligned}(\gamma\circ\col_{\Sigma}(\phi))\circ\rho_{i}^{\kappa^{\Sigma}(C_{H})} & =\gamma\circ(\rho_{i}^{F}\circ\phi_{i})\\
 & =\upsilon_{i}\circ\phi_{i}\\
 & =\col_{\Sigma}(\zeta)\\
 & =\col_{\Sigma}(\zeta)\circ1_{C_{H}}\\
 & =(\col_{\Sigma}(\zeta)\circ\nabla^{C_{H}})\circ\rho_{i}^{\kappa^{\Sigma}(C_{H})}
\end{aligned}
\]
for all $i\in Ob(\Sigma)$. So that $\kappa_{\gamma\circ\col_{\Sigma}(\phi)}^{\Sigma}\circ\rho^{\kappa^{\Sigma}(C_{H})}=\kappa_{\col_{\Sigma}(\zeta)\circ\nabla^{C_{H}}}^{\Sigma}\circ\rho^{\kappa^{\Sigma}(C_{H})}$,
and hence $\col_{\Sigma}(\zeta)\circ\nabla^{C_{H}}=\gamma\circ\col_{\Sigma}(\phi)$.
Therefore, there is a unique morphism $\theta:Z_{\eta}\to C_{N}$
such that $\theta\circ\mu_{\eta}=\gamma$ and $\theta\circ f_{\eta}=\col_{\Sigma}(\zeta)$.
We claim that the pair $(Z_{\eta},\kappa_{\mu_{\eta}}^{\Sigma}\circ(\rho^{F}\circ\omega):N\to\kappa^{\Sigma}(Z_{\eta}))$
is a colimit of $N$. Indeed, by the universal property of colimits,
there is a unique morphism $m:C_{N}\to Z_{\eta}$ in $\A$ such that
$\kappa_{m}^{\Sigma}\circ\rho^{N}=\kappa_{\mu_{\eta}}^{\Sigma}\circ(\rho^{F}\circ\omega)$.
Therefore, the claim holds once we show that $m$ is an isomorphism
in $\A$. For this, observe that, on the one hand: 
\[
\begin{aligned}(\theta\circ m)\circ\rho_{i}^{N} & =\theta\circ(\mu_{\eta}\circ(\rho_{i}^{F}\circ\omega_{i}))\\
 & =\gamma\circ(\rho_{i}^{F}\circ\omega_{i})\\
 & =\upsilon_{i}\circ\omega_{i}\\
 & =\rho_{i}^{N}=1_{C_{N}}\circ\rho_{i}^{N}
\end{aligned}
\]
for all $i\in Ob(\Sigma)$. And hence, $\theta\circ m=1_{C_{N}}$
by the universal property of colimits. On the other hand, from the
above diagram we have that 
\[
f_{\eta}=f_{\eta}\circ1_{C_{H}}=f_{\eta}\circ\nabla^{C_{H}}\circ\rho_{i}^{\kappa^{\Sigma}(C_{H})}=\mu_{\eta}\circ\rho_{i}^{F}\circ\phi_{i}\mbox{,}
\]
for all $i\in Ob(\Sigma)$. In particular, for every $i\in Ob(\Sigma)$,
we get 
\[
f_{\eta}\circ\rho_{i}^{H}=\mu_{\eta}\circ\rho_{i}^{F}\circ\phi_{i}\circ\rho_{i}^{H}=\mu_{\eta}\circ\rho_{i}^{F}\circ\omega_{i}\circ\zeta_{i}=m\circ\rho_{i}^{N}\circ\zeta_{i}=m\circ\col_{\Sigma}(\zeta)\circ\rho_{i}^{H}.
\]
Therefore, $f_{\eta}=m\circ\col_{\Sigma}(\zeta)=\mu_{\eta}\circ\rho_{i}^{F}\circ\phi_{i}$
for all $i\in Ob(\Sigma)$. Now, for each $i\in Ob(\Sigma)$, we get
the following commutative diagram in $\A$ (recall that $\mu_{\eta}\circ(\rho_{i}^{F}\circ\omega_{i})=m\circ\rho_{i}^{N}$):
\[
\xymatrix{ & N(i)\ar[d]^{\omega_{i}}\ar@(u,r)[ddr]^{m\circ\rho_{i}^{N}}\\
C_{H}\ar[r]^{\phi_{i}}\ar@(d,d)[drr]_{m\circ\col_{\Sigma}(\zeta)} & F(i)\ar[dr]_{\mu_{\eta}\circ\rho_{i}^{F}}\\
 &  & Z_{\eta}
}
\]
From the universal property of the upper pushout in the diagram, we
obtain that $\mu_{\eta}\circ\rho_{i}^{F}=m\circ\upsilon_{i}$, for
all $i\in Ob(\Sigma)$. But, each $\upsilon_{i}$ coincides with the
composition $\gamma\circ\rho_{i}^{F}$, and hence $\mu_{\eta}=m\circ\gamma$.
Now, notice that the composition $m\circ\theta$ and $1_{Z_{\eta}}$
complete the following diagram in $\A$ and, therefore, such morphisms
coincide and our claim holds (since $m\circ\theta=1_{Z_{\eta}}$ and
$\theta\circ m=1_{C_{N}}$). 
\[
\xymatrix{ & C_{F}\ar[d]_{\mu_{\eta}}\ar@(r,u)[ddr]^{m\circ\gamma}\\
C_{H}\ar[r]^{f_{\eta}}\ar@(r,l)[drr]_{m\circ\col_{\Sigma}(\zeta)} & Z_{\eta}\ar@{--}[dr]\\
 &  & Z_{\eta}
}
\]
Finally, from the equality $\theta\circ f_{\eta}=\col_{\Sigma}(\zeta)$
we get that the morphism $\col_{\Sigma}(\zeta)$ is a monomorphism
as desired. 
\end{proof}
\begin{rem}
\label{rem:defPsi} {In the setting of the previous theorem, we get
that for each $F\in\Fun(\Sigma,\A)$ and $A\in Ob(\A)$ there is a
natural map } 
\[
\Psi_{F,A}^{\Sigma}:\text{Ext}_{\A}^{1}(C_{F},A)\to\text{Ext}_{\Fun(\Sigma,\A)}^{1}(F,\kappa^{\Sigma}(A))
\]
both in $A$ and in $F$, defined as $\Psi_{F,A}^{\Sigma}(\overline{\epsilon}):=\overline{\kappa^{\Sigma}(\epsilon)\cdot\rho^{F}}$,
where $\kappa^{\Sigma}(\epsilon)\cdot\rho^{F}$ denote the upper exact
sequence in the following commutative diagram in $\Fun(\Sigma,\A)$
with exact rows (here the right square is a pullback of the respective
morphisms): 
\[
\xymatrix{\hspace{1cm}0\ar[r] & \kappa^{\Sigma}(A)\ar[r]\ar@{=}[d] & G_{\kappa^{\Sigma}(\epsilon)}\pushoutcorner\ar[r]\ar[d] & F\ar[r]\ar[d]^{\rho^{F}} & 0\\
\kappa^{\Sigma}(\epsilon):\;0\ar[r] & \kappa^{\Sigma}(A)\ar[r] & \kappa^{\Sigma}(B)\ar[r] & \kappa^{\Sigma}(C_{F})\ar[r] & 0
}
\]
\end{rem}

\begin{lem}
\label{lem:injPsi} {Let $\Sigma$ be a small category and let $\A$
be a $\Sigma$-co-complete abelian category. Then, the map $\Psi_{F,A}^{\Sigma}$
is injective, for all $F\in\Fun(\Sigma,\A)$ and $A\in Ob(\A)$.}
\end{lem}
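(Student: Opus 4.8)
The plan is to translate the vanishing of $\Psi^\Sigma_{F,A}(\overline\epsilon)$ into the splitting of a short exact sequence in $\A$ by passing through the adjunction $(\col_\Sigma,\kappa^\Sigma)$. Fix a short exact sequence $\epsilon:\;0\to A\xrightarrow{u}B\xrightarrow{v}C_F\to 0$ in $\A$ and assume $\Psi^\Sigma_{F,A}(\overline\epsilon)=\overline{\kappa^\Sigma(\epsilon)\cdot\rho^F}=0$. Unwinding the definition in Remark \ref{rem:defPsi}, this means that the upper row $0\to\kappa^\Sigma(A)\to G_{\kappa^\Sigma(\epsilon)}\xrightarrow{p}F\to 0$ of the diagram there splits in $\Fun(\Sigma,\A)$, where $(G_{\kappa^\Sigma(\epsilon)},p,q)$ --- with $q:G_{\kappa^\Sigma(\epsilon)}\to\kappa^\Sigma(B)$ --- is the pullback of $\rho^F$ and $\kappa^\Sigma(v)$. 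Our goal is to produce a splitting of $\epsilon$ itself.

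First I would record the standard fact that, for a pullback of short exact sequences as above, a section $s:F\to G_{\kappa^\Sigma(\epsilon)}$ of $p$ is the same datum as a morphism $t:F\to\kappa^\Sigma(B)$ with $\kappa^\Sigma(v)\circ t=\rho^F$: given $s$, put $t:=q\circ s$ and use $\kappa^\Sigma(v)\circ q=\rho^F\circ p$; conversely, a morphism $t$ with $\kappa^\Sigma(v)\circ t=\rho^F$ together with $1_F$ factors through the pullback to give a section of $p$. Thus the hypothesis furnishes such a morphism $t:F\to\kappa^\Sigma(B)$.

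Finally I would invoke the adjunction. Since $\rho^F:F\to\kappa^\Sigma(C_F)=\kappa^\Sigma(\col_\Sigma F)$ is the unit of $(\col_\Sigma,\kappa^\Sigma)$ at $F$, the assignment $g\mapsto\kappa^\Sigma(g)\circ\rho^F$ is a bijection $\Hom_\A(C_F,Y)\xrightarrow{\ \sim\ }\Hom_{\Fun(\Sigma,\A)}(F,\kappa^\Sigma(Y))$ for every $Y\in\A$. Let $\widetilde t:C_F\to B$ be the unique morphism with $\kappa^\Sigma(\widetilde t)\circ\rho^F=t$. Then $\kappa^\Sigma(v\circ\widetilde t)\circ\rho^F=\kappa^\Sigma(v)\circ t=\rho^F=\kappa^\Sigma(1_{C_F})\circ\rho^F$, so injectivity of the above bijection (with $Y=C_F$) forces $v\circ\widetilde t=1_{C_F}$. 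Hence $\epsilon$ splits and $\overline\epsilon=0$, which gives injectivity of $\Psi^\Sigma_{F,A}$. The only delicate point --- and it is a minor one --- is the bookkeeping in the middle step identifying ``the pullback sequence splits'' with ``$\rho^F$ lifts through $\kappa^\Sigma(v)$''; once that is in place, recognising $\rho^F$ as the adjunction unit makes the conclusion automatic, and no exactness hypothesis on $\col_\Sigma$ is needed.
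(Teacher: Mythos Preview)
Your proof is correct and follows essentially the same route as the paper's: both arguments translate the splitting of the pullback sequence into a morphism $t:F\to\kappa^\Sigma(B)$ satisfying $\kappa^\Sigma(v)\circ t=\rho^F$, and then use the universal property of the colimit (equivalently, the adjunction unit) to extract a section $\widetilde t$ of $v$. You are simply more explicit about the ``splitting $\Leftrightarrow$ lift'' identification and about phrasing the colimit universal property as the adjunction bijection, but the substance is identical.
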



\begin{proof}
Let $\epsilon:\;A\stackrel{f}{\hookrightarrow}B\stackrel{g}{\twoheadrightarrow}C_{F}$
be an extension in $\A$ such that $\Psi_{F,A}^{\Sigma}(\epsilon)$
is the trivial extension in $\text{Ext}_{\Fun(\Sigma,\A)}^{1}(\kappa^{\Sigma}(A),F)$.
Thus, there is a natural transformation $\tau:F\to\kappa^{\Sigma}(B)$
such that $\rho^{F}=\kappa^{\Sigma}(g)\circ\tau$. Now, from the universal
property of colimits, we obtain a unique morphism $h:C_{F}\to B$
such that $\tau=\kappa_{h}^{\Sigma}\circ\rho^{F}$. Using once again
the universal property of colimits, we deduce that $g\circ h=1_{C_{F}}$
and hence $\overline{\epsilon}$ is the trivial extension in $\text{Ext}_{\A}^{1}(C_{F},A)$. 
\end{proof}
The following is the second main result of the paper. 
\begin{thm}
\label{teo:secondmain} {Let $\Sigma$ be a small category and let
$\A$ be a $\Sigma$-co-complete abelian category. Then, the following
conditions are equivalent:} 
\begin{enumerate}
\item the functor $\col_{\Sigma}:\Fun(\Sigma,\A)\to\A$ is exact; 
\item the map $\Psi_{F,A}^{\Sigma}$ is bijective, for every $F\in\Fun(\Sigma,\A)$
and $A\in Ob(\A)$; 
\item the bifunctors $\text{Ext}_{\A}^{1}(\col_{\Sigma}(?),-)$ and $\text{Ext}_{\Fun(\Sigma,\A)}^{1}(?,\kappa^{\Sigma}(-))$
are naturally isomorphic in the obvious way. 
\end{enumerate}
\end{thm}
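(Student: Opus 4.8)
The plan is to prove the cycle (a) $\Rightarrow$ (c) $\Rightarrow$ (b) $\Rightarrow$ (a). The implication (b) $\Rightarrow$ (a) should be the cheapest: assuming $\Psi^\Sigma_{F,A}$ is bijective for all $F$ and $A$, I would run the argument already used in Theorem \ref{teo:firstmain} in reverse. Concretely, given an exact sequence $\eta:\kappa^\Sigma(A)\overset\phi\hookrightarrow F\overset\psi\twoheadrightarrow G$, the morphism $f_\eta$ from diagram (1.1) fits into the exact sequence $A\overset{f_\eta}\to Z_\eta\overset{g_\eta}\twoheadrightarrow C_G$, and the class of $\eta$ in $\Ext^1_{\Fun(\Sigma,\A)}(G,\kappa^\Sigma(A))$ is, essentially by construction of $\Psi^\Sigma_{G,A}$, the image under $\Psi^\Sigma_{G,A}$ of the (generalized, possibly non-exact on the left) sequence represented by $f_\eta$; surjectivity of $\Psi^\Sigma_{G,A}$ together with Lemma \ref{lem:injPsi} forces $f_\eta$ to represent a genuine extension, i.e. $f_\eta$ is a monomorphism, and then Theorem \ref{teo:firstmain} gives exactness of $\col_\Sigma$. (Alternatively, one notes directly that bijectivity of $\Psi$ is exactly condition (b) repackaged, via the identification of $f_\eta$ with the left-hand map of $\Theta(\eta)$.)

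For (a) $\Rightarrow$ (c): assuming $\col_\Sigma$ is exact, I would show that $\Psi^\Sigma_{-,?}$ is a natural transformation of bifunctors $\Ext^1_\A(\col_\Sigma(?),-)\Rightarrow\Ext^1_{\Fun(\Sigma,\A)}(?,\kappa^\Sigma(-))$ which is moreover an isomorphism. Naturality in both variables is a diagram chase using the adjunction $(\col_\Sigma,\kappa^\Sigma)$ together with the compatibility of pullback/pushout of extensions with the functor $\kappa^\Sigma$ and with the unit $\rho^F$; the formula $\Psi^\Sigma_{F,A}(\overline\epsilon)=\overline{\kappa^\Sigma(\epsilon)\cdot\rho^F}$ makes this a routine (if tedious) verification, and since we are only told it is ``natural both in $A$ and in $F$'' in Remark \ref{rem:defPsi}, I would spell out the two squares. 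Injectivity is already Lemma \ref{lem:injPsi} and holds without any hypothesis. For surjectivity under the exactness assumption: given $\overline\eta\in\Ext^1_{\Fun(\Sigma,\A)}(F,\kappa^\Sigma(A))$ represented by $\eta:\kappa^\Sigma(A)\hookrightarrow E\twoheadrightarrow F$, form the one-point extension $\eta'$ and the diagram (1.1) with $G=F$; exactness of $\col_\Sigma$ makes $f_\eta:A\to Z_\eta$ a monomorphism, so $\epsilon:=(A\overset{f_\eta}\hookrightarrow Z_\eta\overset{g_\eta}\twoheadrightarrow C_F)$ is a genuine extension in $\Ext^1_\A(C_F,A)$, and I would check $\Psi^\Sigma_{F,A}(\overline\epsilon)=\overline\eta$ by exhibiting an isomorphism of short exact sequences between $\kappa^\Sigma(\epsilon)\cdot\rho^F$ and $\eta$ — the needed comparison morphism $E\to G_{\kappa^\Sigma(\epsilon)}$ comes from the map $\xi^\eta:F_\eta\to\kappa^{\Sigma_*}(Z_\eta)$ of the previous section restricted to $\Sigma$, together with the pullback universal property. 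This is the step I expect to be the main obstacle: verifying that the extension class produced really equals $\overline\eta$ requires carefully tracking the several universal-property-defined maps ($\mu_\eta$, $f_\eta$, $\xi^\eta$) through the pushout in (1.1) and the pullback defining $G_{\kappa^\Sigma(\epsilon)}$, and checking the two triangles making the comparison a morphism of extensions.

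Finally (c) $\Rightarrow$ (b) is immediate: a natural isomorphism of bifunctors is in particular a bijection $\Psi^\Sigma_{F,A}$ for every $F$ and $A$. This closes the cycle. I would organize the write-up so that the bulk of the work — naturality and the surjectivity computation — is carried out once in the implication (a) $\Rightarrow$ (c), since (c) is the strongest statement, and the remaining implications are one-line deductions citing Theorem \ref{teo:firstmain} and Lemma \ref{lem:injPsi}. One subtlety to flag: because $\Ext^1$ here denotes the ``big'' group of extensions (classes may not form a set in general), I would make sure every step is phrased in terms of representing short exact sequences and morphisms between them rather than invoking any set-level machinery, exactly as in \cite{argudin2021yoneda}.
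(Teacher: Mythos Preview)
Your plan for (a) $\Rightarrow$ (c) and (c) $\Rightarrow$ (b) is essentially what the paper does (the paper organises it as (b) $\Leftrightarrow$ (c) trivially and (a) $\Rightarrow$ (b) by the same surjectivity argument you sketch: build $\epsilon$ from $f_\eta$ via diagram (1.1), then verify $\Psi^\Sigma_{F,A}(\overline\epsilon)=\overline\eta$ by exhibiting a morphism of short exact sequences with identity on the left, which forces the right square to be a pullback).

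The gap is in your (b) $\Rightarrow$ (a). The map $\Psi^\Sigma_{G,A}$ is defined only on $\Ext^1_\A(C_G,A)$, i.e.\ on classes of \emph{genuine} short exact sequences, so speaking of the image under $\Psi$ of a ``generalized, possibly non-exact on the left'' sequence is not meaningful. Even if you extended $\Psi$ to right-exact sequences by the same pullback recipe, Lemma \ref{lem:injPsi} only gives injectivity on $\Ext^1$, and there is no injectivity statement on the larger domain that would let you conclude the $f_\eta$-sequence coincides with a genuine preimage. The paper's argument avoids this entirely: surjectivity of $\Psi^\Sigma_{G,A}$ furnishes a genuine extension $\epsilon:A\stackrel{f}{\hookrightarrow}B\stackrel{g}{\twoheadrightarrow}C_G$ with $\Psi^\Sigma_{G,A}(\overline\epsilon)=\overline\eta$; unwinding the pullback description of $\Psi$ gives a morphism $u:F\to\kappa^\Sigma(B)$ with $u\circ\phi=\kappa^\Sigma(f)$, and passing to colimits yields $h:C_F\to B$ satisfying $h\circ\col_\Sigma(\phi)=f\circ\nabla^A$. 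The universal property of the pushout square in (1.1) then produces $\gamma:Z_\eta\to B$ with $\gamma\circ f_\eta=f$, and since $f$ is a monomorphism so is $f_\eta$. Now Theorem \ref{teo:firstmain} applies. In short: you do not show that the $f_\eta$-sequence \emph{is} the preimage, you show that $f_\eta$ \emph{factors} the monomorphism coming from some preimage.
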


\begin{proof}
(b) $\Leftrightarrow$ (c) Is clear by definition of the maps $\Psi_{F,A}^{\Sigma}$
(see Remark \ref{rem:defPsi}).

(a) $\Rightarrow$ (b) Let $F$ be a functor in $\Fun(\Sigma,\A)$
and let $A$ be an object in $\A$. By Lemma \ref{lem:injPsi} our
task is reduced to check that $\Psi_{F,A}^{\Sigma}$ is surjective.
For this, consider an arbitrary exact sequence $\eta:\;\kappa^{\Sigma}(A)\stackrel{\phi}{\hookrightarrow}G\stackrel{\psi}{\twoheadrightarrow}F$
in $\Fun(\Sigma,\A)$. Now, using the exactness of the functor $\col_{\Sigma}$,
we obtain the following commutative diagram in $\A$ with exact rows,
where the left square is a pushout (see (1.1)). 
\[
\xymatrix{\hspace{0,8cm}0\ar[r] & C_{\kappa^{\Sigma}(A)}\ar[rr]^{\col_{\Sigma}(\phi)}\ar[d]_{\nabla^{A}} &  & C_{G}\ar[rr]^{\col_{\Sigma}(\psi)}\ar[d]_{\mu_{\eta}} &  & C_{F}\ar[r]\ar@{=}[d] & 0\\
\epsilon:\;0\ar[r] & A\ar[rr]^{f_{\eta}} &  & Z_{\eta}\pullbackcorner\ar[rr]^{g_{\eta}} &  & C_{F}\ar[r] & 0
}
\]
Let $\epsilon$ be the lower exact sequence in the above diagram.
We claim that $\Psi_{F,A}^{\Sigma}(\overline{\epsilon})=\overline{\eta}$.
To show this, notice that we have the following commutative diagram
in $\Fun(\Sigma,\A)$:

\[
\xymatrix{\eta:\;0\ar[r] & \kappa^{\Sigma}(A)\ar[rr]^{\phi}\ar[d]_{\rho^{\kappa^{\Sigma}(A)}} &  & G\ar[rr]^{\psi}\ar[d]_{\rho^{G}} &  & F\ar[r]\ar[d]_{\rho^{F}} & 0\\
0\ar[r] & \kappa^{\Sigma}(C_{\kappa^{\Sigma}(A)})\ar[rr]^{\kappa^{\Sigma}(\col_{\Sigma}(\phi))}\ar[d]_{\kappa_{\nabla^{A}}^{\Sigma}} &  & \kappa^{\Sigma}(C_{G})\ar[rr]^{\kappa^{\Sigma}(\col_{\Sigma}(\psi))}\ar[d]_{\kappa_{\mu_{\eta}}^{\Sigma}} &  & \kappa^{\Sigma}(C_{F})\ar@{=}[d]\ar[r] & 0\\
\kappa^{\Sigma}(\epsilon):\;0\ar[r] & \kappa^{\Sigma}(A)\ar[rr]^{\kappa^{\Sigma}(f_{\eta})} &  & \kappa^{\Sigma}(Z_{\eta})\ar[rr]^{\kappa^{\Sigma}(g_{\eta})} &  & \kappa^{\Sigma}(C_{F})\ar[r] & 0
}
\]

Using the fact that $\kappa_{\nabla^{A}}^{\Sigma}\circ\rho^{\kappa^{\Sigma}(A)}=1_{\kappa^{\Sigma}(A)}$,
we obtain that the right outer rectangle in the above diagram is a
pullback (see \cite[dual of Lemma 5.2, p.35]{Popescu}). And, hence
$\Psi_{F,A}^{\Sigma}(\overline{\epsilon})=\overline{\eta}$.

(b) $\Rightarrow$ (a) By Theorem \ref{teo:firstmain}, it is enough
to check that $f_{\eta}$ is a monomorphism, for every exact sequence
$\eta:\;\kappa^{\Sigma}(A)\stackrel{\phi}{\hookrightarrow}G\stackrel{\psi}{\twoheadrightarrow}F$
in $\Fun(\Sigma,\A)$, where $A$ is an object in $\A$. Observe that,
by hypothesis, there exists an exact sequence $\epsilon:\;A\stackrel{f}{\hookrightarrow}B\stackrel{g}{\twoheadrightarrow}C_{F}$
in $\mathcal{A}$ such that $\Psi_{F,A}^{\Sigma}(\overline{\epsilon})=\overline{\eta}$.
So that we have the following commutative diagram in $\Fun(\Sigma,\A)$.
\[
\xymatrix{\Psi_{F,A}^{\Sigma}(\epsilon):\;0\ar[r] & \kappa^{\Sigma}(A)\ar[r]^{\phi}\ar@{=}[d] & G\pushoutcorner\ar[r]^{\psi}\ar[d]^{u} & F\ar[r]\ar[d]^{\rho^{F}} & 0\\
\kappa^{\Sigma}(\epsilon):\;0\ar[r] & \kappa^{\Sigma}(A)\ar[r]^{\kappa^{\Sigma}(f)} & \kappa^{\Sigma}(B)\ar[r]^{\kappa^{\Sigma}(g)} & \kappa^{\Sigma}(C_{F})\ar[r] & 0
}
\]
Now, from the universal property of colimits, there is a unique morphism
$h:C_{G}\to B$ such that $\kappa_{h}^{\Sigma}\circ\rho^{G}=u$. Using
the fact that $\kappa^{\Sigma}(f)=u\circ\phi$ and $\kappa^{\Sigma}(\col_{\Sigma}(\phi))\circ\rho^{\kappa^{\Sigma}(A)}=\rho^{G}\circ\phi$,
we deduce that 
\[
\begin{aligned}\kappa^{\Sigma}(h\circ\col_{\Sigma}(\phi))\circ\rho^{\kappa^{\Sigma}(A)} & =\kappa_{h}^{\Sigma}\circ\kappa^{\Sigma}(\col_{\Sigma}(\phi))\circ\rho^{\kappa^{\Sigma}(A)}\\
 & =\kappa_{h}^{\Sigma}\circ\rho^{G}\circ\phi\\
 & =u\circ\phi\\
 & =\kappa^{\Sigma}(f)\\
 & =\kappa^{\Sigma}(f)\circ1_{\kappa^{\Sigma}(A)}\\
 & =\kappa^{\Sigma}(f)\circ\kappa_{\nabla^{A}}^{\Sigma}\circ\rho^{\kappa^{\Sigma}(A)}\\
 & =\kappa^{\Sigma}(f\circ\nabla^{A})\circ\rho^{\kappa^{\Sigma}(A)}.
\end{aligned}
\]
Then, by the universal property of colimits, $h\circ\col_{\Sigma}(\phi)=f\circ\nabla^{A}$.
Now, from the universal property of pushouts, we get a unique morphism
$\gamma:Z_{\eta}\to B$ such that $\gamma\circ f_{\eta}=f$ (see (1.1)).
Hence, $f_{\eta}$ is a monomorphism. 
\end{proof}
\begin{rem}
Let $\mathcal{X}$ and $\mathcal{Y}$ be abelian categories, and $F:\mathcal{X}\rightarrow\mathcal{Y}$
and $G:\mathcal{Y}\rightarrow\mathcal{X}$ be functors such that $(F,G)$
is an adjoint pair. We will say that $(F,G)$ is an \textbf{Ext-adjoint
pair} if there is a natural isomorphism $\Ext_{\mathcal{X}}^{1}(-,G(?))\cong\Ext_{\mathcal{Y}}^{1}(F(-),?)$.
Note that the statement (c) of the above theorem can be restated by
saying that $(\operatorname{colim}_{\Sigma},\kappa^{\Sigma})$ is
an Ext-adjoint pair. It should be noted that not every adjoint pair
is Ext-adjoint. The reader can find counterexamples of this in \cite[pp.29--30]{adams1967adjoint}. 
\end{rem}

By duality we get the following straightforward corollary. 
\begin{cor}
\label{cor:lim-exac} {Let $\Sigma$ be a small category and let
$\B$ be a $\Sigma$-complete abelian category. Then, the following
conditions are equivalent:} 
\begin{enumerate}
\item the functor $\lim_{\Sigma}:\Fun(\Sigma,\B)\to\B$ is exact; 
\item the maps $\Phi_{B,F}^{\Sigma}:\text{Ext}_{\B}^{1}(B,{L_{F}})\to\text{Ext}_{\Fun(\Sigma,\B)}^{1}(\kappa^{\Sigma}(B),F)$
defined as $\Phi_{B,F}^{\Sigma}(\overline{\epsilon})=\overline{\varrho^{F}\cdot\kappa^{\Sigma}(\epsilon)}$,
where $\varrho^{F}\circ\kappa^{\Sigma}(\epsilon)$ denotes the lower
exact sequence in the below commutative diagram in $\Fun(\Sigma,\B)$
with exact rows (here the left square is a pushout), are natural isomorphisms
both in $B$ and in $F$. 
\[
\xymatrix{\kappa^{\Sigma}(\epsilon):\;0\ar[r] & \kappa^{\Sigma}(L_{F})\ar[r]\ar[d]_{\varrho^{F}} & \kappa^{\Sigma}(A)\ar[r]\ar[d] & \kappa^{\Sigma}(B)\ar[r]\ar@{=}[d] & 0\\
0\ar[r] & F\ar[r] & G\pullbackcorner\ar[r] & \kappa^{\Sigma}(B)\ar[r] & 0
}
\]
\item the bifunctors $\text{Ext}_{\B}^{1}(?,\lim_{\Sigma}(-))$ and $\text{Ext}_{\Fun(\Sigma,\B)}^{1}(\kappa^{\Sigma}(?),-)$
are naturally isomorphic in the obvious way. 
\end{enumerate}
\end{cor}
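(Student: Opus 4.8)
The plan is to obtain this statement as a formal consequence of Theorem \ref{teo:secondmain} by passing to the opposite category. First I would observe that if $\B$ is a $\Sigma$-complete abelian category, then $\B^{\op}$ is a $\Sigma^{\op}$-co-complete abelian category, and that under the canonical identification $\Fun(\Sigma,\B)^{\op}\cong\Fun(\Sigma^{\op},\B^{\op})$ the limit functor $\lim_{\Sigma}:\Fun(\Sigma,\B)\to\B$ corresponds to the colimit functor $\col_{\Sigma^{\op}}:\Fun(\Sigma^{\op},\B^{\op})\to\B^{\op}$, while the constant diagram functor $\kappa^{\Sigma}:\B\to\Fun(\Sigma,\B)$ corresponds to $\kappa^{\Sigma^{\op}}:\B^{\op}\to\Fun(\Sigma^{\op},\B^{\op})$. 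Exactness of a functor is a self-dual property, so $\lim_{\Sigma}$ is exact if and only if $\col_{\Sigma^{\op}}$ is exact; this gives the equivalence of (a) here with condition (a) of Theorem \ref{teo:secondmain} applied to $\A:=\B^{\op}$ and the small category $\Sigma^{\op}$.

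Next I would translate condition (c) of Theorem \ref{teo:secondmain} through the duality. Since $\Ext^1_{\mathcal{C}^{\op}}(X,Y)\cong\Ext^1_{\mathcal{C}}(Y,X)$ naturally, the natural isomorphism of bifunctors $\Ext^1_{\B^{\op}}(\col_{\Sigma^{\op}}(?),-)\cong\Ext^1_{\Fun(\Sigma^{\op},\B^{\op})}(?,\kappa^{\Sigma^{\op}}(-))$ furnished by that theorem becomes precisely the asserted natural isomorphism $\Ext^1_{\B}(?,\lim_{\Sigma}(-))\cong\Ext^1_{\Fun(\Sigma,\B)}(\kappa^{\Sigma}(?),-)$, which is statement (c) here. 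Likewise condition (b) of Theorem \ref{teo:secondmain}, bijectivity of $\Psi^{\Sigma^{\op}}_{F,A}$ for all $F$ and $A$, dualises to bijectivity of the maps $\Phi^{\Sigma}_{B,F}$; here one must check that the explicit recipe for $\Psi^{\Sigma^{\op}}$ from Remark \ref{rem:defPsi} — built from a pullback square along $\rho^{F}$ — becomes, in $\B$, the pushout-along-$\varrho^{F}$ construction displayed in the statement of (b), because a pullback square in $\B^{\op}$ is a pushout square in $\B$ and $\rho^{F}$ in $\Fun(\Sigma^{\op},\B^{\op})$ is $\varrho^{F}$ in $\Fun(\Sigma,\B)$. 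This bookkeeping, while routine, is the one place where some care is needed, since one wants the dualised map to agree on the nose with the map $\Phi^{\Sigma}_{B,F}(\overline{\epsilon})=\overline{\varrho^{F}\cdot\kappa^{\Sigma}(\epsilon)}$ defined in the statement, and not merely up to some unnamed isomorphism.

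Putting these three translations together yields the chain of equivalences (a) $\Leftrightarrow$ (b) $\Leftrightarrow$ (c) exactly as in Theorem \ref{teo:secondmain}. The main obstacle, such as it is, is purely notational: one has to fix once and for all the identification $\Fun(\Sigma,\B)^{\op}\cong\Fun(\Sigma^{\op},\B^{\op})$ and verify that $\lim_{\Sigma}$, $\kappa^{\Sigma}$, $\varrho^{F}$, and the pushout square in the definition of $\Phi^{\Sigma}_{B,F}$ all correspond under it to $\col_{\Sigma^{\op}}$, $\kappa^{\Sigma^{\op}}$, $\rho^{F}$, and the pullback square in the definition of $\Psi^{\Sigma^{\op}}_{F,A}$, respectively. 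Once this dictionary is in place the corollary follows formally, with no further argument required; this is why it is genuinely a straightforward consequence of the preceding theorem.
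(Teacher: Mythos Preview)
Your proposal is correct and follows the same approach as the paper, which simply states that the corollary follows ``by duality'' from Theorem~\ref{teo:secondmain} without giving any further details. Your write-up spells out exactly the dictionary (passing to $\B^{\op}$ and $\Sigma^{\op}$, identifying $\lim_{\Sigma}$ with $\col_{\Sigma^{\op}}$, $\varrho^{F}$ with $\rho^{F}$, and the pushout defining $\Phi$ with the pullback defining $\Psi$) that the paper leaves implicit.
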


Now, we give a new proof of the following result, which is a particular
case of the dual of \cite[Theorem 3.2.8]{Popescu}. 
\begin{cor}
\label{cor:kappaSigmapreserve} {In the setting of the previous corollary.
If $\B$ has enough projectives, then the functor $\lim_{\Sigma}$
is exact if and only if the functor $\kappa^{\Sigma}$ }preserves
projective objects. 
 
\end{cor}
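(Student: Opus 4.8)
The plan is to use the Ext-adjointness characterization from Corollary \ref{cor:lim-exac}, specifically the equivalence (a) $\Leftrightarrow$ (c), together with the standard fact that a right adjoint preserves injectives exactly when its left adjoint is exact, dualized to the present situation. The key observation is that $(\kappa^\Sigma, \lim_\Sigma)$ is an adjoint pair, so $\kappa^\Sigma$ is automatically left adjoint (to $\lim_\Sigma$); dualizing the "left adjoint exact $\Rightarrow$ right adjoint preserves injectives" slogan does \emph{not} immediately give what we want, so instead I would argue directly via Ext.

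First I would prove the easy direction: if $\lim_\Sigma$ is exact, then $\kappa^\Sigma$ preserves projectives. Let $P \in \B$ be projective. Then $\Ext^1_\B(P, X) = 0$ for all $X \in \B$. By Corollary \ref{cor:lim-exac}(c), since $\lim_\Sigma$ is exact we have a natural isomorphism $\Ext^1_{\Fun(\Sigma,\B)}(\kappa^\Sigma(P), F) \cong \Ext^1_\B(P, \lim_\Sigma(F))$ for every $F \in \Fun(\Sigma,\B)$. The right-hand side vanishes because $P$ is projective, hence $\Ext^1_{\Fun(\Sigma,\B)}(\kappa^\Sigma(P), F) = 0$ for all $F$, i.e. $\kappa^\Sigma(P)$ is projective in $\Fun(\Sigma,\B)$. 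Note this direction does not need the "enough projectives" hypothesis.

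For the converse, suppose $\B$ has enough projectives and $\kappa^\Sigma$ preserves projective objects. I want to show $\lim_\Sigma$ is exact; by Corollary \ref{cor:lim-exac} it suffices to show that each $\Phi^\Sigma_{B,F}$ is bijective (condition (b)), or equivalently to verify the naturality isomorphism in (c). The strategy is dimension shifting: pick a projective presentation $0 \to K \to P \to B \to 0$ in $\B$ with $P$ projective. Applying $\kappa^\Sigma$ (which is exact, being a right adjoint that is also a left adjoint — or simply because it is computed objectwise) gives a short exact sequence $0 \to \kappa^\Sigma(K) \to \kappa^\Sigma(P) \to \kappa^\Sigma(B) \to 0$ in $\Fun(\Sigma,\B)$ with $\kappa^\Sigma(P)$ projective by hypothesis. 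Now compare the two long exact sequences in $\Ext$: the one obtained in $\B$ by applying $\Ext^*_\B(-, \lim_\Sigma F)$ to the presentation, and the one obtained in $\Fun(\Sigma,\B)$ by applying $\Ext^*_{\Fun(\Sigma,\B)}(-, F)$ to the $\kappa^\Sigma$-image. Using the adjunction isomorphism $\Hom_{\Fun(\Sigma,\B)}(\kappa^\Sigma(-), F) \cong \Hom_\B(-, \lim_\Sigma F)$ in degree $0$ and the vanishing of $\Ext^1$ against the projectives $P$ and $\kappa^\Sigma(P)$, a diagram chase (five lemma on the relevant connecting maps) forces $\Phi^\Sigma_{B,F}$ to be an isomorphism; one must check that the comparison maps in the two long exact sequences are compatible with $\Phi^\Sigma_{-,F}$, which follows from the explicit pushout description of $\Phi$ in Corollary \ref{cor:lim-exac} and naturality of the adjunction.

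The main obstacle I anticipate is the bookkeeping of the "big" Ext groups and making the diagram chase rigorous: the long exact sequence in $\Ext$ for these large abelian groups, and the compatibility of the adjunction isomorphism with connecting homomorphisms, need to be invoked carefully (one can cite \cite{mitchell} or \cite{Popescu} for the existence of the long exact sequence of Ext in an arbitrary abelian category). A cleaner alternative, which I would prefer if the machinery is available, is to bypass dimension shifting entirely: use that $\kappa^\Sigma$ preserving projectives means exactly that its right adjoint $\lim_\Sigma$ preserves epimorphisms — indeed, for a projective $P$, $\Hom_\B(P, \lim_\Sigma(\psi)) \cong \Hom_{\Fun}(\kappa^\Sigma(P), \psi)$ is surjective whenever $\psi$ is an epimorphism (as $\kappa^\Sigma(P)$ is projective), and since $\B$ has enough projectives this means $\lim_\Sigma(\psi)$ is an epimorphism; as $\lim_\Sigma$ is always left exact (being a right adjoint), it is then exact. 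This second argument is shorter and I would lead with it, keeping the Ext-adjoint reformulation as the conceptual remark that motivates the corollary's placement here.
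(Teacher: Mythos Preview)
Your proposal is correct, and your dimension-shifting argument is essentially the paper's own proof: choose a short exact sequence $0\to A\to P\to B\to 0$ with $P$ projective, apply $\Hom_\B(-,\lim_\Sigma F)$ and $\Hom_{\Fun(\Sigma,\B)}(\kappa^\Sigma(-),F)$, use the adjunction isomorphisms in degree~$0$ and the vanishing of $\Ext^1$ against $P$ and $\kappa^\Sigma(P)$, and conclude by the Five Lemma that $\Phi^\Sigma_{B,F}$ is an isomorphism, whence $\lim_\Sigma$ is exact by Corollary~\ref{cor:lim-exac}(b). The compatibility check you flag is exactly the point the paper passes over in one line by asserting commutativity of the ladder.

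Your alternative route is genuinely different from the paper's. It bypasses Corollary~\ref{cor:lim-exac} altogether: from the adjunction $\Hom_\B(P,\lim_\Sigma(-))\cong\Hom_{\Fun(\Sigma,\B)}(\kappa^\Sigma(P),-)$ and projectivity of $\kappa^\Sigma(P)$ you deduce that $\Hom_\B(P,\lim_\Sigma(\psi))$ is surjective for every epimorphism $\psi$ and every projective $P$; enough projectives then force $\lim_\Sigma(\psi)$ to be an epimorphism, and left exactness of $\lim_\Sigma$ finishes. This is shorter and more elementary, but it does not exercise the Ext-adjointness characterisation that is the paper's main point; the paper's argument is chosen precisely because it exhibits Corollary~\ref{cor:lim-exac} at work and keeps the corollary within the narrative of the article.
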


\begin{proof}
($\Rightarrow$) It follows by Corollary \ref{cor:lim-exac}(c).

($\Leftarrow$) Let $B$ be an object in $\B$ and let $F$ be a functor
in $\Fun(\Sigma,\B)$. Consider an exact sequence in $\B$ of the
form: 
\[
\xymatrix{\epsilon:\;0\ar[r] & A\ar[r] & P\ar[r] & B\ar[r] & 0}
\mbox{,}
\]
where $P$ is a projective object of $\B$. Applying the functors
$\Gamma:=\text{Hom}_{\B}(-,\limite_{\Sigma}(F))$ and $\Upsilon:=\text{Hom}_{\Fun(\Sigma,\B)}(-,F)$
on $\epsilon$ and $\kappa^{{\Sigma}}(\epsilon)$, respectively, we
get the commutative diagram with exact rows 
\[
\xymatrix{0\ar[r] & \Gamma(B)\ar[r]\ar[d]_{f_{1}} & \Gamma(P)\ar[r]\ar[d]_{f_{2}} & \Gamma(A)\ar[r]\ar[d]_{f_{3}} & \Gamma^{1}(B)\ar[r]\ar[d]_{\Phi_{B,F}^{\Sigma}} & \Gamma^{1}(P)=0\\
0\ar[r] & \Upsilon(\kappa^{{\Sigma}}(B))\ar[r] & \Upsilon(\kappa^{{\Sigma}}(P))\ar[r] & \Upsilon(\kappa^{{\Sigma}}(A))\ar[r] & \Upsilon^{1}(\kappa^{{\Sigma}}(B))\ar[r] & \Upsilon^{1}(\kappa^{{\Sigma}}(P))
}
\]
where $\Gamma^{1}$ and $\Upsilon^{1}$ denote the functors $\text{Ext}_{\B}^{1}(-,\limite_{\Sigma}(F))$
and $\text{Ext}_{\Fun(\Sigma,\B)}^{1}(-,F)$ respectively, and the
morphisms $f_{i}$'s are the natural isomorphisms given by the adjoint
pair $(\kappa^{\Sigma},\lim_{\Sigma}).$ Using the hypothesis on $\kappa^{\Sigma}$,
we know that $\Upsilon^{1}(\kappa^{{\Sigma}}(P))=0$ and hence $\Phi_{B,F}^{\Sigma}$
is an isomorphism by the Five Lemma. Thus, this implication follows
by Corollary \ref{cor:lim-exac}(b). 
\end{proof}
The following result is a key for a direct consequence of the above
corollary. 

\begin{lem}
\label{lem:kappasetinjproj} Let $\A$ be an abelian category. Then,
the functor $\kappa^{\Sigma}$ preserve injective and projective objects,
for every set $\Sigma$ viewed as a small category. 
\end{lem}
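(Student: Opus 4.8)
The plan is to reduce everything to the elementary componentwise description of $\Fun(\Sigma,\A)$ that is available precisely because $\Sigma$ is discrete. First I would recall that, the only morphisms of $\Sigma$ being identities, $\Fun(\Sigma,\A)$ is nothing but the product category $\prod_{s\in\Sigma}\A$: an object is a family $(F(s))_{s\in\Sigma}$, a morphism is a family of morphisms, and kernels, cokernels, monomorphisms and epimorphisms are all formed componentwise. In particular a monomorphism (resp. epimorphism) in $\Fun(\Sigma,\A)$ is exactly a family of monomorphisms (resp. epimorphisms) in $\A$.

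The key observation is the following identification of Hom groups. For $E\in\A$ and $F\in\Fun(\Sigma,\A)$, a natural transformation $\tau\colon F\to\kappa^{\Sigma}(E)$ amounts to a family $(\tau_s)_{s\in\Sigma}$ with $\tau_s\in\Hom_\A(F(s),E)$, with \emph{no} further constraint: the naturality squares are indexed only by identity morphisms of $\Sigma$, hence are automatically commutative. Therefore $\Hom_{\Fun(\Sigma,\A)}(F,\kappa^{\Sigma}(E))\cong\prod_{s\in\Sigma}\Hom_\A(F(s),E)$, naturally in $F$; dually, $\Hom_{\Fun(\Sigma,\A)}(\kappa^{\Sigma}(P),F)\cong\prod_{s\in\Sigma}\Hom_\A(P,F(s))$, naturally in $F$.

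Now let $E$ be an injective object of $\A$ and let $\iota\colon F\hookrightarrow G$ be a monomorphism in $\Fun(\Sigma,\A)$, that is, a family of monomorphisms $\iota_s\colon F(s)\hookrightarrow G(s)$. Under the identification above, the induced map $\Hom_{\Fun(\Sigma,\A)}(G,\kappa^{\Sigma}(E))\to\Hom_{\Fun(\Sigma,\A)}(F,\kappa^{\Sigma}(E))$ becomes the product $\prod_{s\in\Sigma}\Hom_\A(\iota_s,E)$. Each factor $\Hom_\A(\iota_s,E)\colon\Hom_\A(G(s),E)\to\Hom_\A(F(s),E)$ is surjective because $E$ is injective, and a product of surjective maps of abelian groups is surjective; hence the induced map is surjective and $\kappa^{\Sigma}(E)$ is injective. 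The statement for projectives is obtained by the dual argument: for $P$ projective and an epimorphism $G\twoheadrightarrow F$ in $\Fun(\Sigma,\A)$, the induced map on $\Hom_{\Fun(\Sigma,\A)}(\kappa^{\Sigma}(P),-)$ is, via the second identification, a product of the surjections $\Hom_\A(P,G(s))\to\Hom_\A(P,F(s))$, hence surjective.

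I do not expect a genuine obstacle here: the proof is routine once the Hom identification is in place. The single point worth emphasising is that this identification, and hence the whole argument, uses the discreteness of $\Sigma$ in an essential way; for a general small category the naturality constraints defining $\Hom_{\Fun(\Sigma,\A)}(F,\kappa^{\Sigma}(E))$ are nontrivial and the conclusion need not hold, so the lemma should not be expected beyond the case of a set $\Sigma$. (Alternatively, when $\A$ is $\Sigma$-complete or $\Sigma$-co-complete one could invoke that $\kappa^{\Sigma}$ then admits an exact adjoint given by the componentwise product or coproduct; but the argument above has the advantage of requiring no completeness hypothesis on $\A$.)
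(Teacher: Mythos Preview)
Your proof is correct and follows essentially the same approach as the paper: both arguments exploit that, for a discrete $\Sigma$, everything in $\Fun(\Sigma,\A)$ is componentwise, and then apply the projectivity/injectivity hypothesis coordinate by coordinate. The only cosmetic difference is the characterisation used: the paper shows that every short exact sequence ending in $\kappa^{\Sigma}(P)$ splits (choosing sections $\xi_i$ of each $\phi_i$ and noting the family is automatically natural), whereas you show that $\Hom_{\Fun(\Sigma,\A)}(\kappa^{\Sigma}(P),-)$ sends epimorphisms to surjections via the product identification of Hom groups; these are equivalent formulations of the same idea.
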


\begin{proof}
We show that the functor $\kappa^{\Sigma}$ preserves projective objects.
The rest of the statement can be proved by similar arguments. Let
$P$ be a projective object in $\A$ and let $\epsilon:\;F\stackrel{\psi}{\hookrightarrow}G\stackrel{\phi}{\twoheadrightarrow}\kappa^{\Sigma}(P)$
be an exact sequence in $\Fun(\Sigma,\A)$. In such case, $\phi_{i}$
is a split epimorphism, for all $i\in Ob(\Sigma)=\Sigma$. Thus, for
each $i\in\Sigma$, we can take a morphism $\xi_{i}:\kappa^{\Sigma}(P)(i)=P\to G(i)$
such that $\phi_{i}\circ\xi_{i}=1_{\kappa^{\Sigma}(P)(i)}$. Now,
using the fact that the only morphisms in $\Sigma$ are the identity
morphisms, we obtain that the family of morphisms $\xi:=(\xi_{i})_{i\in\Sigma}$
is a natural transformation from the functor $\kappa^{\Sigma}(P)$
to the functor $G$. Finally, note that $\xi\circ\phi=1_{\kappa^{\Sigma}(P)}$
and hence $\overline{\epsilon}$ is the trivial extension in $\text{Ext}_{\Fun(\Sigma,\A)}^{1}(\kappa^{\Sigma}(P),F)$.
Therefore, $\kappa^{\Sigma}(P)$ is a projective object in $\Fun(\Sigma,\A)$
as desired. 
\end{proof}
From Corollary \ref{cor:kappaSigmapreserve} and its dual, together
with the Lemma \ref{lem:kappasetinjproj}, we deduce the Corollary
3.2.9 in \cite{Popescu}. 
\begin{cor}
\label{cor:ab3 con suf inj es ab4} Let $\A$ be an Ab3 abelian category
and let $\B$ be an Ab3{*} abelian category. Then, the following assertions
hold: 
\begin{enumerate}
\item $\A$ is Ab4, whenever $\A$ has enough injectives; 
\item $\B$ is Ab4{*}, whenever $\A$ has enough projectives. 
\end{enumerate}
\end{cor}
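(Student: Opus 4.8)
The plan is to deduce Corollary~\ref{cor:ab3 con suf inj es ab4} purely formally from Corollary~\ref{cor:kappaSigmapreserve}, its dual, and Lemma~\ref{lem:kappasetinjproj}, without reopening any diagram chase. First I would fix an arbitrary set $\Sigma$, viewed as a small category. Since $\A$ is Ab3 it is in particular $\Sigma$-co-complete, and dually an Ab3{*} category $\B$ is $\Sigma$-complete; moreover being Ab4 (resp. Ab4{*}) means exactly that $\col_\Sigma$ (resp. $\lim_\Sigma$) is exact for every such $\Sigma$, by the definitions recalled in Section~2. So the whole statement reduces to checking, for each set $\Sigma$, that $\col_\Sigma$ is exact when $\A$ has enough injectives, and that $\lim_\Sigma$ is exact when $\B$ has enough projectives.

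For part~(2), this is immediate: Corollary~\ref{cor:kappaSigmapreserve} says that, for a $\Sigma$-complete category with enough projectives, $\lim_\Sigma$ is exact if and only if $\kappa^\Sigma$ preserves projectives; and Lemma~\ref{lem:kappasetinjproj} guarantees that $\kappa^\Sigma$ does preserve projectives whenever $\Sigma$ is a set. Combining the two gives exactness of $\lim_\Sigma$ for every set $\Sigma$, hence $\B$ is Ab4{*}. For part~(1) I would invoke the dual of Corollary~\ref{cor:kappaSigmapreserve}: in a $\Sigma$-co-complete category with enough injectives, $\col_\Sigma$ is exact if and only if $\kappa^\Sigma$ preserves injectives; and again Lemma~\ref{lem:kappasetinjproj} supplies exactly that, since it asserts $\kappa^\Sigma$ preserves \emph{both} injectives and projectives for $\Sigma$ a set. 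Hence $\col_\Sigma$ is exact for every set $\Sigma$ and $\A$ is Ab4.

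The only genuinely non-routine point is bookkeeping about \emph{which} dual statement is being used: Corollary~\ref{cor:kappaSigmapreserve} is phrased for $\lim_\Sigma$ and projectives, so part~(1) needs its formal dual (colimits, injectives), and I would make explicit that dualizing a $\Sigma$-complete category with enough projectives yields a $\Sigma$-co-complete category with enough injectives, so no new argument is required. There is no real obstacle here — the substance was already carried out in Theorem~\ref{teo:secondmain} and its corollaries; this last step is just the specialization $\Sigma = $ a discrete set together with the observation that on a discrete diagram every level-wise split epimorphism assembles into a split epimorphism of functors, which is precisely Lemma~\ref{lem:kappasetinjproj}.

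\begin{proof}
Let $\Sigma$ be a set, viewed as a small category. Since $\A$ is Ab3 it is $\Sigma$-co-complete, and since $\B$ is Ab3{*} it is $\Sigma$-complete. Recall that $\A$ is Ab4 precisely when $\col_\Sigma$ is exact for every such $\Sigma$, and dually $\B$ is Ab4{*} precisely when $\lim_\Sigma$ is exact for every such $\Sigma$.

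(1) Assume $\A$ has enough injectives. By the dual of Corollary \ref{cor:kappaSigmapreserve}, the functor $\col_\Sigma$ is exact if and only if $\kappa^\Sigma$ preserves injective objects. By Lemma \ref{lem:kappasetinjproj}, $\kappa^\Sigma$ preserves injective objects for every set $\Sigma$. Hence $\col_\Sigma$ is exact for every set $\Sigma$, so $\A$ is Ab4.

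(2) Assume $\B$ has enough projectives. By Corollary \ref{cor:kappaSigmapreserve}, the functor $\lim_\Sigma$ is exact if and only if $\kappa^\Sigma$ preserves projective objects. By Lemma \ref{lem:kappasetinjproj}, $\kappa^\Sigma$ preserves projective objects for every set $\Sigma$. Hence $\lim_\Sigma$ is exact for every set $\Sigma$, so $\B$ is Ab4{*}.
\end{proof}
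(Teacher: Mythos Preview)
Your proof is correct and follows exactly the approach indicated in the paper: the corollary is deduced immediately from Corollary~\ref{cor:kappaSigmapreserve} (and its dual) together with Lemma~\ref{lem:kappasetinjproj}, specialised to discrete $\Sigma$. You have also correctly read the evident typo in part~(2) as ``$\B$ has enough projectives''.
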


The following result is straightforward. 
\begin{lem}
\label{lem:XiTheta} Let $\A$ be an abelian category and let $\Sigma$
be a set. Given an exact sequence $\epsilon:\;H\stackrel{\phi}{\hookrightarrow}G\stackrel{\psi}{\twoheadrightarrow}F$
in $\Fun(\Sigma,\A)$, consider the exact sequence $\epsilon_{i}:\;H(i)\stackrel{\phi_{i}}{\hookrightarrow}G(i)\stackrel{\psi_{i}}{\twoheadrightarrow}F(i)$
$\forall i\in\Sigma$. Then, the assignment $\epsilon\mapsto\prod_{i\in\Sigma}\epsilon_{i}$
gives rise a family of isomorphisms 
\[
\xymatrix{\Xi_{{F,}A}^{{\Sigma}}:\text{Ext}_{\Fun(\Sigma,\A)}^{1}(F,\kappa^{\Sigma}(A))\ar[r] & \prod_{i\in\Sigma}\text{Ext}_{\A}^{1}(F(i),A)\\
\Theta_{{F,}A}^{{\Sigma}}:\text{Ext}_{\Fun(\Sigma,\A)}^{1}(\kappa^{\Sigma}(A),F)\ar[r] & \prod_{i\in\Sigma}\text{Ext}_{\A}^{1}(A,F(i))
}
\]
for all $F$ in $\Fun(\Sigma,\A)$ and $A$ in $Ob(\A)$. 
\end{lem}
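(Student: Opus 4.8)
The plan is to unwind the definitions so that the claimed isomorphisms become completely transparent. Since $\Sigma$ is a \emph{set}, viewed as a discrete category, a functor $F\in\Fun(\Sigma,\A)$ is nothing but an $\Sigma$-indexed family of objects $(F(i))_{i\in\Sigma}$, with no interaction between the components, and a natural transformation is just a family of morphisms. Consequently the category $\Fun(\Sigma,\A)$ is the product category $\prod_{i\in\Sigma}\A$, and exact sequences, pullbacks, and pushouts are all computed componentwise. This already tells us that for fixed $F$ and $A$ the assignment $\epsilon\mapsto(\epsilon_i)_{i\in\Sigma}$ sending a short exact sequence to its components is a bijection on the level of sequences, compatible with the Baer sum (which is computed from pullbacks and pushouts, hence componentwise) and with equivalence of extensions.

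Concretely, first I would record that $\kappa^{\Sigma}(A)(i)=A$ for every $i$, so that an extension of $F$ by $\kappa^{\Sigma}(A)$ in $\Fun(\Sigma,\A)$ restricts in degree $i$ to an extension of $F(i)$ by $A$ in $\A$; this defines the map $\Xi_{F,A}^{\Sigma}$, and the obviously inverse construction—assemble a family of extensions $(\epsilon_i)_{i\in\Sigma}$ into the extension $i\mapsto\epsilon_i$ in the product category—shows it is a bijection. Next I would check that $\Xi_{F,A}^{\Sigma}$ respects the group structure: since the Baer sum in any abelian category is obtained by pulling back along the diagonal and pushing out along the codiagonal, and since in $\Fun(\Sigma,\A)=\prod_{i}\A$ these operations are performed coordinatewise, the componentwise bijection is a group homomorphism, hence an isomorphism of (big) abelian groups. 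The argument for $\Theta_{F,A}^{\Sigma}$ is identical after swapping the roles of the two arguments of $\Ext^1$, using $\kappa^{\Sigma}(A)(i)=A$ on the left instead of the right.

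Finally I would remark on naturality. One must verify that $\Xi^{\Sigma}_{-,?}$ and $\Theta^{\Sigma}_{-,?}$ are natural in $F$ and in $A$; but morphisms in $\Fun(\Sigma,\A)$ are again families of morphisms, and the induced maps on $\Ext^1$ by pullback and pushout are computed componentwise, so naturality in both variables is immediate from the componentwise description. Hence the family $(\Xi^{\Sigma}_{F,A})$ and $(\Theta^{\Sigma}_{F,A})$ are natural isomorphisms of bifunctors.

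The only point that requires any care—and the reason the lemma is labelled ``straightforward'' rather than ``trivial''—is the set-theoretic bookkeeping: $\Ext^1_{\A}(X,Y)$ is a priori only a ``big'' abelian group (a proper class of short exact sequences modulo equivalence), so one should make sure that the bijection $\epsilon\mapsto(\epsilon_i)_i$ is well-defined at the level of equivalence classes and that the product $\prod_{i\in\Sigma}\Ext^1_{\A}(F(i),A)$ is formed in the appropriate (possibly large) sense. This is routine but is the substantive content; everything else is a matter of observing that limits, colimits, kernels, cokernels, pullbacks, pushouts and Baer sums in a product category $\prod_{i\in\Sigma}\A$ are computed coordinatewise.
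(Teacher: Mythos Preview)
The paper gives no proof of this lemma at all; it is introduced with the sentence ``The following result is straightforward.'' Your argument is correct and is exactly the sort of unwinding the authors had in mind: since $\Sigma$ is discrete, $\Fun(\Sigma,\A)$ is the product category $\prod_{i\in\Sigma}\A$, and short exact sequences, equivalence of extensions, pullbacks, pushouts and hence Baer sums are all computed componentwise, so $\epsilon\mapsto(\epsilon_i)_{i\in\Sigma}$ is a group isomorphism. Your remarks on naturality go slightly beyond what the lemma asserts (it only claims a family of isomorphisms), but they are correct and harmless.
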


Let $\A$ be an Ab3 abelian category and $\Sigma$ be a set viewed
as a small category. In this context, $\col_{\Sigma}(F)=\coprod_{i\in\Sigma}F(i)$
for all $F$ in $\Fun(\Sigma,\A)$. We can use this to get as a corollary
\cite[Theorems 4.9 and 4.11]{argudin2021yoneda}. 
\begin{cor}
Let $\A$ be an Ab3 abelian category and let $\B$ be an Ab3{*} abelian
category. Then, the following assertions hold: 
\begin{enumerate}
\item $\A$ is Ab4 if, and only if, the canonical map 
\[
\text{Ext}_{\A}^{1}(\coprod_{i\in I}A_{i},A)\to\prod_{i\in I}\text{Ext}_{\A}^{1}(A_{i},A)
\]
is an isomorphism, for all $A$ and $(A_{i})_{i\in I}$ object and
family of objects in $\A$, respectively. 
\item $\B$ is Ab4{*} if, and only if, the canonical map 
\[
\text{Ext}_{\B}^{1}(B,\prod_{i\in I}B_{i})\to\prod_{i\in I}\text{Ext}_{\B}^{1}(B,B_{i})
\]
is an isomorphism, for all $B$ and $(B_{i})_{i\in I}$ object and
family of objects in $\B$, respectively. 
\end{enumerate}
\end{cor}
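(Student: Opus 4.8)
The plan is to obtain both assertions as immediate consequences of Theorem~\ref{teo:secondmain} (respectively Corollary~\ref{cor:lim-exac}) together with Lemma~\ref{lem:XiTheta}, once the canonical maps appearing in the statement are identified with suitable composites of the maps introduced above. I first treat (1). Fix a set $I$, viewed as a discrete small category, and a family $(A_{i})_{i\in I}$ of objects of $\A$; let $F\in\Fun(I,\A)$ be the functor $i\mapsto A_{i}$. Since $\A$ is Ab3 it is $I$-co-complete, and $C_{F}=\col_{I}(F)=\coprod_{i\in I}A_{i}$, with $\rho^{F}$ given by the family of canonical inclusions $\mu_{i}\colon A_{i}\to\coprod_{i\in I}A_{i}$.

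The key (and only non-formal) step is to check that the composite
\[
\Xi_{F,A}^{I}\circ\Psi_{F,A}^{I}\colon\text{Ext}_{\A}^{1}\Bigl(\coprod_{i\in I}A_{i},A\Bigr)\longrightarrow\prod_{i\in I}\text{Ext}_{\A}^{1}(A_{i},A)
\]
is exactly the canonical map $\overline{\eta}\mapsto(\overline{\eta}\cdot\mu_{i})_{i\in I}$. By Remark~\ref{rem:defPsi} we have $\Psi_{F,A}^{I}(\overline{\epsilon})=\overline{\kappa^{I}(\epsilon)\cdot\rho^{F}}$, and $\Xi_{F,A}^{I}$ sends this class to the family of its evaluations at the objects $i\in I$. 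Since evaluation at $i$ is exact and commutes with pullbacks, and since $\kappa^{I}(\epsilon)$ evaluated at $i$ is $\epsilon$ itself while $\rho^{F}$ evaluated at $i$ is $\mu_{i}$, the $i$-th coordinate is the pullback $\epsilon\cdot\mu_{i}$. This reduces to the pasting law for pullbacks applied coordinatewise, which I would spell out in one short commutative square.

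Now I combine. For a fixed set $I$, Theorem~\ref{teo:secondmain} says $\col_{I}$ is exact if and only if $\Psi_{F,A}^{I}$ is bijective for every $F\in\Fun(I,\A)$ and every object $A$ of $\A$; since $\Xi_{F,A}^{I}$ is always an isomorphism by Lemma~\ref{lem:XiTheta}, this is equivalent to $\Xi_{F,A}^{I}\circ\Psi_{F,A}^{I}$ being an isomorphism for all such $F$ and $A$, i.e. to the canonical map of (1) being an isomorphism for every family indexed by $I$ and every $A$. By definition $\A$ is Ab4 precisely when $\col_{\Sigma}$ is exact for every set $\Sigma$; letting $I$ range over all sets yields (1). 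Statement (2) is the dual assertion: taking $\Sigma=I$ discrete and $F\in\Fun(I,\B)$ the functor $i\mapsto B_{i}$, one has $L_{F}=\lim_{I}(F)=\prod_{i\in I}B_{i}$ with $\varrho^{F}$ the family of projections, and the same argument applied to $\Theta_{F,B}^{I}\circ\Phi_{B,F}^{I}$ (using Corollary~\ref{cor:lim-exac} and Lemma~\ref{lem:XiTheta}) identifies this composite with the canonical map of (2) and finishes the proof.

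The main obstacle is precisely the identification carried out in the second step: one must make sure that $\Xi_{F,A}^{I}\circ\Psi_{F,A}^{I}$ is the canonical map itself rather than a reindexing or a sign-twisted variant of it. This is not deep, but it requires being careful about how pullback along $\rho^{F}$ interacts with evaluation at each $i\in I$; once this square is written down, everything else is a direct invocation of Theorem~\ref{teo:secondmain}, Corollary~\ref{cor:lim-exac} and Lemma~\ref{lem:XiTheta}.
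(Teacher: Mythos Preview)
Your proposal is correct and follows essentially the same route as the paper's own proof: one identifies the canonical map with the composite $\Xi_{F,A}^{I}\circ\Psi_{F,A}^{I}$, uses that every $F\in\Fun(I,\A)$ arises from an $I$-indexed family, and then invokes Theorem~\ref{teo:secondmain} together with Lemma~\ref{lem:XiTheta} (and dually Corollary~\ref{cor:lim-exac} for part~(2)). In fact you are more explicit than the paper about why that composite really is the canonical map, which is a welcome clarification rather than a departure.
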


\begin{proof}
We prove statement (a), statement (b) follows by duality. Let $I$
be a set and let $(A_{i})_{i\in I}$ be a family of objects in $\A$.
Then, the assignment $i\mapsto A_{i}$ give rise to a functor $F:I\to\A$.
Notice that for each $A$ in $Ob(A)$, we have that the map in (a)
coincide with the compositions $\Xi_{{F,}A}^{{I}}\circ\Psi_{{F,}A}^{{I}}$
(see Theorem \ref{teo:secondmain} and Lemma \ref{lem:XiTheta}).
On the other hand, every functor in $\Fun(I,\A)$ can be viewed as
an $I$-indexed family of objects in $\A$. Then, the result follows
by Theorem \ref{teo:secondmain} and Lemma \ref{lem:XiTheta}. 
\end{proof}

\subsection{Projective effacements and the Ab4{*} condition}

Let $\mathcal{A}$ be an abelian category. We say that an epimorphism
$\pi:B\to A$ in $\mathcal{A}$ is a \textbf{projective effacement}
if $\Ext_{\mathcal{A}}^{1}(\pi,X)=0$ for all $X\in\mathcal{A}$.
This means that, for every epimorphism $p:E\to A$, there is a morphism
$b:B\to E$ such that $p\circ b=\pi$. In case every object $A\in\mathcal{A}$
admits a projective effacement $B\to A$, we say that $\mathcal{A}$
has \textbf{enough} \textbf{projective effacements}. 

In the context of Grothendieck categories, the existence of enough
projective effacements is used to give a characterization of the Ab4{*}
condition (see \cite[Thm. A]{B} or \cite[Cor. 1.4]{R}). The goal
of this section is to prove that the existence of enough projective
effacements implies the Ab4{*} condition for any Ab3{*} abelian category.
\begin{prop}
Let $\mathcal{A}$ be an Ab3{*} abelian category. If $\mathcal{A}$
has enough projective effacements, then $\mathcal{A}$ is Ab4{*}. 
\end{prop}

\begin{proof}
Consider a set $\Sigma$, a functor $F\in\Fun(\Sigma,\mathcal{A})$,
and a short exact sequence $F\stackrel{\alpha}{\hookrightarrow}E\stackrel{\beta}{\twoheadrightarrow}\kappa^{\Sigma}A$
in $\Fun(\Sigma,\mathcal{A})$. In other words, we have a family of
short exact sequences $\{F(i)\stackrel{\alpha_{i}}{\hookrightarrow}E(i)\stackrel{\beta_{i}}{\twoheadrightarrow}A\}_{i\in\Sigma}.$
Consider the following pull-back diagram.
\[
\xymatrix{0\ar[r] & \prod_{i\in\Sigma}F(i)\ar[r]\ar@{=}[d] & G\pushoutcorner\ar[rr]^{f_{\eta}}\ar[d] &  & A\ar[d]^{\Delta^{A}}\\
0\ar[r] & \prod_{i\in\Sigma}F(i)\ar[r]^{\prod_{i\in\Sigma}\alpha_{i}} & \prod_{i\in\Sigma}E(i)\ar[rr]^{\prod_{i\in\Sigma}\beta_{i}} &  & A^{\Sigma}
}
\]
It is enough to prove that $f_{\eta}$ always is an epimorphism to
conclude that $\mathcal{A}$ is Ab4{*} by the dual of Theorem \ref{teo:firstmain}.
For this, consider a projective effacement $\pi:B\to A$. Observe
that, for every $i\in\Sigma$, there is $\gamma_{i}:B\to E(i)$ such
that $\beta_{i}\circ\gamma_{i}=\pi$; and that $\Delta^{A}\circ\pi=\pi^{\Sigma}\circ\Delta^{B}$.
And thus, since $(\prod_{i\in\Sigma}\beta_{i})\circ(\prod_{i\in\Sigma}\gamma_{i})\circ\Delta^{B}=\pi^{\Sigma}\circ\Delta^{B}=\Delta^{A}\circ\pi$,
it follows from the universal property of pull-backs that there is
a morphism $x:B\to G$ such that $f_{\eta}\circ x=\pi$. Therefore,
we have that $f_{\eta}$ is an epimorphism since $\pi$ is epic. 

\[
\xymatrix{ & B\ar@(r,u)[drrr]^{\pi}\ar[d]_{\Delta^{B}}\ar@{-->}[dr]^{x}\\
 & B^{\Sigma}\ar@(d,l)[dr]_{\prod_{i\in\Sigma}\gamma_{i}} & G\pushoutcorner\ar[rr]^{f_{\eta}}\ar[d] &  & A\ar[d]^{\Delta^{A}}\\
 &  & \prod_{i\in\Sigma}E(i)\ar[rr]^{\prod_{i\in\Sigma}\beta_{i}} &  & A^{\Sigma}
}
\]
\end{proof}

\subsection{Direct limits in the heart of a Happel-Reiten-Smal{\o} $t$-structure }

In this section we will be considering an Ab5 and Ab3{*} abelian category
$\mathcal{A}$ (e.g. a Grothendieck category), together with a torsion
pair $\mathfrak{t}=(\mathcal{T},\mathcal{F})$ in $\mathcal{A}$.
Recall that this means that $\Hom_{\mathcal{A}}(\mathcal{T},\mathcal{F})=0$,
and that every $A\in\mathcal{A}$ admits a short exact sequence $tA\stackrel{}{\hookrightarrow}A\stackrel{}{\twoheadrightarrow}(1:t)A$
in $\mathcal{A}$, where $tA\in\mathcal{A}$ and $(1:t)A\in\mathcal{F}$.
It is well-known that $\mathcal{F}$ is closed under subobjects, extensions
and products \cite[Prop. 2.2]{PS5}. Moreover, since $\mathcal{A}$
is Ab5, we have that $\mathcal{F}$ is closed under coproducts also
(see \cite[Thm. 3.1.9 and Cor. 3.1.3]{mitchell}).

In this setting, the torsion pair $\t$ induces a new abelian category
$\mathcal{H}_{\t}$ (namely, the heart of the Happel-Reiten-Smal{\o}
$t$-structure associated to $\mathfrak{t}$). Such abelian category
is described as the full subcategory of the objects $D$ in the derived
category $\mathcal{D}=\mathcal{D}(\mathcal{A})$ that admit a distinguished
triangle $F_{D}[1]\to D\to T_{D}[0]$ in $\mathcal{D}$, where $T_{D}\in\mathcal{T}$
and $F_{D}\in\mathcal{F}$. In fact, $\overline{\t}=(\mathcal{F}[1],\mathcal{T}[0])$
turns out to be a torsion pair in $\mathcal{H}_{\t}$ with torsion
radical $\mathcal{H}_{\t}\to\mathcal{A}$, $D\mapsto H^{-1}(D)[1]=F_{D}[1]$.
We will be assuming that $\t$ is a torsion pair such that $\mathcal{H}_{\t}$
has Hom sets (see \cite[Qn. 2.11 and Prop. 2.12]{PS5}). It is worth
to point out that, since $\mathcal{A}$ is Ab4, $\mathcal{D}$ has
coproducts (see \cite[Lem. 4.1.5]{K}). In fact, for a family of objects
$\{A_{i}\}_{i\in I}$ in $\mathcal{A}$, the coproduct $\coprod_{i\in I}(A_{i}[0])$
in $\mathcal{D}$ is equal to $(\coprod_{i\in I}A_{i})[0]$. This
implies that, for a family $\{H_{i}\}_{i\in I}\subseteq\mathcal{H}_{\t}$,
the coproduct $\coprod_{i\in I}H_{i}$ in $\mathcal{D}$ is also a
coproduct in $\mathcal{H}_{\t}$. In particular, $\mathcal{H}_{\t}$
is Ab3. The reader is referred to \cite{HRS,PS1,PS5} for unexplained
notation and terminology. 

In \cite[Thm. 4.8]{PS1} and \cite[Thm. 4.18]{PS5}, it was proved
that $\mathcal{H}_{\mathfrak{t}}$ is Ab5 if $\mathcal{F}$ is closed
under direct limits. The goal of this section is to give a different
proof of this. 

The following lemma is inspired in the proof of \cite[Thm. 4.8]{PS1}. 
\begin{lem}
\label{lem:sucF}Let $\mathcal{A}$ be an Ab5 abelian category and
$\t=(\mathcal{T},\mathcal{F})$ be a torsion pair in $\mathcal{A}$.
Consider a directed set $\Sigma$ and $F\in\Fun(\Sigma,\mathcal{A})$
such that $F(i)\in\mathcal{F}$ for all $i\in\Sigma$. Then, the following
statements hold true. 
\begin{enumerate}
\item For every short exact sequence $X\stackrel{\iota}{\hookrightarrow}\col_{\Sigma}F\stackrel{\pi}{\twoheadrightarrow}Y$
in $\mathcal{A}$ with $Y\in\mathcal{F}$, there is a short exact
sequence $F'\stackrel{}{\hookrightarrow}F\stackrel{}{\twoheadrightarrow}F''$
in $\Fun(\Sigma,\mathcal{A})$ such that: (1) $F'(i),F''(i)\in\mathcal{F}$
for all $i\in\Sigma$; (2) $X=\col_{\Sigma}F'$; and (3) $Y=\col_{\Sigma}(F'')$. 
\item In particular, there is a short exact sequence $F'\stackrel{}{\hookrightarrow}F\stackrel{}{\twoheadrightarrow}F''$
in $\Fun(\Sigma,\mathcal{A})$ such that: (1) $F'(i),F''(i)\in\mathcal{F}$
for all $i\in\Sigma$; (2) $t(\col_{\Sigma}F)=\col_{\Sigma}F'$; and
$(1:t)(\col_{\Sigma}F)=\col_{\Sigma}(F'')$.
\end{enumerate}
\end{lem}

\begin{proof}
Define $F'(i):=\Ker(\pi\circ\rho_{i}^{F})$ for every $i\in\Sigma$.
One can prove that the family $\{F'(i)\}_{i\in\Sigma}$ defines a
functor $F'\in\Fun(\Sigma,\mathcal{A})$. Moreover, we have a short
exact sequence $F'\stackrel{}{\hookrightarrow}F\stackrel{}{\twoheadrightarrow}F''$
in $\Fun(\Sigma,\mathcal{A})$. Note that, since $F'(i)\leq F(i)$,
$F''(i)=\im(\pi\circ\rho_{i}^{F})\leq Y$ and $\mathcal{F}$ is closed
under subobjects, we have that $F'(i),F''(i)\in\mathcal{F}$ for all
$i\in\Sigma$. Lastly, since $\mathcal{A}$ is Ab5, direct limits
preserve kernels. And thus, $X=\col_{\Sigma}F'$ and $Y=\col_{\Sigma}(F'')$. 
\end{proof}
In the setting of item (b) in the lemma above, note that $\col_{\Sigma}(F)\in\mathcal{F}$
if and only of $\col_{\Sigma}F'=0$. Hence, we have the following
result. 
\begin{cor}
\label{cor:1}Let $\mathcal{A}$ be an Ab5 abelian category and $\t=(\mathcal{T},\mathcal{F})$
be a torsion pair in $\mathcal{A}$. Then, the following statements
are equivalent. 
\begin{enumerate}
\item $\mathcal{F}$ is closed under direct limits. 
\item For every directed set $\Sigma$ and a functor $F\in\Fun(\Sigma,\mathcal{A})$
such that $F(i)\in\mathcal{F}$ for all $i\in\Sigma$, we have that
$\col_{\Sigma}F=0$ if $\col_{\Sigma}F\in\mathcal{T}$. 
\end{enumerate}
\end{cor}

\begin{rem}
Consider a directed set $\Sigma$ and a functor $F\in\Fun(\Sigma,\mathcal{A})$
such that $F(i)\in\mathcal{F}$ for all $i\in\Sigma$. Observe that
$F$ induces a functor $F[1]\in\Fun(\Sigma,\mathcal{H}_{\t})$, defined
as $F[1](i):=F(i)[1]$ for all $i\in\Sigma$. In what follows we will
denote the colimit of $F[1]$ as $\col_{\Sigma}^{\mathcal{H}_{\t}}F[1]$.
Similarly, a functor $T\in\Fun(\Sigma,\mathcal{A})$ with $T(i)\in\mathcal{T}$
for all $i\in\Sigma$ defines a diagram $T[0]\in\Fun(\Sigma,\mathcal{H}_{\t})$.
\end{rem}

The following lemma is based in \cite[Prop. 4.2]{PS1}. 
\begin{lem}
\label{lem:colF}Let $\mathcal{A}$ be an Ab5 and Ab3{*} abelian category
and $\t=(\mathcal{T},\mathcal{F})$ be a torsion pair in $\mathcal{A}$
such that $\mathcal{H}_{\t}$ has Hom sets. For a directed set $\Sigma$
and $T,F\in\Fun(\Sigma,\mathcal{A})$, the following statements hold
true. 
\begin{enumerate}
\item If $F(i)\in\mathcal{F}$ for all $i\in\Sigma$, then $\col_{\Sigma}^{\mathcal{H}_{\t}}(F[1])=((1:t)\col_{\Sigma}F)[1]$. 
\item If $T(i)\in\mathcal{T}$ for all $i\in\Sigma$, then $\col_{\Sigma}^{\mathcal{H}_{\t}}(T[0])=(\col_{\Sigma}T)[0]$. 
\end{enumerate}
\end{lem}

\begin{prop}
Let $\mathcal{A}$ be an Ab5 and Ab3{*} abelian category and $\t=(\mathcal{T},\mathcal{F})$
be a torsion pair in $\mathcal{A}$ such that $\mathcal{H}_{\t}$
has Hom sets. If $\mathcal{H}_{\t}$ is Ab5, then $\mathcal{F}$ is
closed under direct limits. 
\end{prop}

\begin{proof}
Consider a directed set $\Sigma$ and a functor $F\in\Fun(\Sigma,\mathcal{A})$
such that $F(i)\in\mathcal{F}$, for all $i\in\Sigma$, and $C_{F}\in\mathcal{T}$.
On the one hand, this implies for every $i\in\Sigma$, that $\rho_{i}^{F}[1]\in\Hom_{\mathcal{D}}(F(i)[1],C_{F}[1])\cong\Ext_{\mathcal{H}_{\t}}^{1}(F(i)[1],C_{F}[0]).$
Moreover, since $\rho_{i}^{F}\circ F(\lambda)=\rho_{j}^{F}$ for every
morphism $\lambda:i\to j$ in $\Sigma$, we have that $\rho_{i}^{F}[1]\circ F(\lambda)[1]=\rho_{j}^{F}[1]$.
One can use this to build a short exact sequence $\epsilon:\;\kappa^{\Sigma}(C_{F}[0])\stackrel{}{\hookrightarrow}H\stackrel{}{\twoheadrightarrow}F[1]$
in $\Fun(\Sigma,\mathcal{A})$. On the other hand, $\col_{\Sigma}^{\mathcal{H}_{\t}}(F[1])=(1:t)C_{F}[1]=0$
by Lemma \ref{lem:colF}(a). Hence, we have that 
\[
\epsilon\in\text{Ext}_{\Fun(\Sigma,\mathcal{H}_{\t})}^{1}(F[1],\kappa^{\Sigma}(C_{F}[0]))\cong\text{Ext}_{\mathcal{H}_{\t}}^{1}(\col_{\Sigma}^{\mathcal{H}_{\t}}(F[1]),C_{F}[0])=0
\]
 where the isomorphism is the one given by Theorem \ref{teo:secondmain}:
\[
\Psi_{F[1],C_{F}[0]}^{\Sigma}:\text{Ext}_{\mathcal{H}_{\t}}^{1}(\col_{\Sigma}^{\mathcal{H}_{\t}}(F[1]),C_{F}[0])\to\text{Ext}_{\Fun(\Sigma,\mathcal{H}_{\t})}^{1}(F[1],\kappa^{\Sigma}(C_{F}[0])).
\]
Hence, it follows that $\rho_{i}^{F}=0$ for all $i\in\Sigma$. And
thus, $C_{F}=0$. Therefore, $\mathcal{F}$ is closed under direct
limits by Corollary \ref{cor:1}. 
\end{proof}


\begin{thebibliography}{10}
\bibitem{adams1967adjoint}Adams, W. W., Rieffel, M. A.: \emph{Adjoint
functors and derived functors with an application to the cohomology
of semigroups}. Journal of Algebra, \textbf{7}(1), 25--34 (1967)

\bibitem{argudin2021yoneda} Argudín-Monroy, A.: \emph{The Yoneda
Ext and arbitrary coproducts in abelian categories.} Glasgow Mathematical
Journal, \textbf{64}(2), 277-291 (2021)

\bibitem{AP} Argudín-Monroy, A., Parra, C.E. \emph{Universal co-extensions
of torsion abelian groups}. J. Algebra 647: 1--27 (2024)

\bibitem{B} Barr, M. \emph{The existence of injective effacements}.
Can. Math. Bull. 18(1), 1--6 (1975) 

\bibitem{CGM} Colpi, R., Gregorio, E., Mantese, F. \emph{On the Heart
of a faithful torsion theory}, J. Algebra 307, 841--863 (2007) 

\bibitem{Ab} Grothendieck, A.: \emph{Sur quelques points d'Algèbre
Homologique}. Tohoku Math. J. \textbf{9}(2), 119--221 (1957)

\bibitem{HRS} Happel, D., Reiten, I., Smal{\o}, S.O. \emph{Tilting
in abelian categories and quasitilted algebras}. Providence, RI: American
Mathematical Society (AMS) (1996)

\bibitem{K} Krause, H. \emph{Homological theory of representations}.
Cambridge: Cambridge University Press (2022)

\bibitem{maclane} Mac Lane, S.: \emph{Categories for the working
mathematician}. Vol. 5. Springer Verlag Graduate texts in Mathematics
(1998)

\bibitem{mitchell} Mitchell, B.: \emph{Theory of Categories.} Academic
Press (1965)

\bibitem{PS1} Parra, C.E., Saor{\'i}n, M. \emph{Direct limits in
the heart of t-structure: the case of a torsion pair}, J. Pure Appl.
Algebra\emph{,} 219(9), 4117--4143 (2015)

\bibitem{PS5} Parra, C. E., Saorín, M. \emph{The HRS tilting process
and Grothendieck hearts of t-structures}. Contemporary Mathematics
(AMS) 769, 209--241, (2021)

\bibitem{Popescu} Popescu, N.: \emph{Abelian categories with applications
to rings and modules}. Vol. 3. Academic Press, London (1973)

\bibitem{R} Roos, J.-E. \emph{Derived functors of inverse limits
revisited}. J. Lond. Math. Soc. 73 (1), 65--83 (2006) 

\bibitem{ringsofQuotients} Stenström, B.: \emph{Rings of quotients}.
Springer-Verlag, Berlin Heidelberg (1975)
\end{thebibliography}
\end{document}